\newtheorem{theorem}{Theorem}[section]
\newtheorem{lemma}{Lemma}[section]
\newtheorem{definition}{Definition}[section]
\newtheorem{proposition}{Proposition}[section]
\newtheorem{remark}{Remark}[section]
\begin{document}
\title
{Linear Reduction and Homotopy Control for Steady  
Drift-Diffusion Systems in Narrow Convex Domains} 
\author{Joseph W. Jerome\footnotemark[1]}
\date{}
\maketitle
\pagestyle{myheadings}
\markright{}
\medskip

\vfill
{\parindent 0cm}

{\bf Key words:} Solution curves, continuation, boundary value problems,
drift-diffusion systems 

\bigskip
 {\bf 2020 AMS classification numbers:}35A01, 35B30, 35Q49, 46N20, 65J05 

\fnsymbol{footnote}
\footnotetext[1]{Department of Mathematics, Northwestern University,
                Evanston, IL 60208.} 
\begin{abstract}
This article develops and applies results, originally introduced in
earlier work, for the existence of homotopy curves, terminating
at a desired solution. 
We describe the principal hypotheses and results in section two;
right inverse approximation is at the core of the theory.
We apply this theory in section three  
to the basic drift-diffusion equations. 
The carrier densities are not
assumed to satisfy Boltzmann statistics and the Einstein relations are
not assumed. By proving the existence of the homotopy curve, we validate
the underlying  
computational 
framework of a predictor/corrector scheme, where the corrector utilizes an
approximate Newton method. 
The analysis depends on the assumption of 
domains of narrow width. However, no assumption is made regarding the
domain diameter.
\end{abstract}

\section{Introduction.}
\label{Introduction}
Potential-driven transport is now central in mathematical modeling in 
science and engineering. For electrostatic potentials, 
application areas include semiconductor transport \cite{Lundstrom}, 
ion channel transport \cite{Keener}, 
as well as the
behavior of electrochemical systems \cite{Rubin}. 
Various approaches have been employed to analyze these systems.
The reference \cite{semicon}
established existence by the creation of an implicit composite solution mapping,
essentially an abstract integral operator equation for systems. 
This has proven to be effective for analysis, but remains intricate,
particularly as regards approximation (cf.\ \cite{KerJer}). In the present 
article, we employ a
direct differentiable system map. This shifts the burden to one
of establishing regularity and derivative 
mapping surjectivity. Fixed point theory
is eliminated in favor of homotopy. In effect, existence is established if
the desired system can be represented as the terminus of a solution curve
which begins with a standard linear special case. 
The general result is found in section two, and is referenced from
\cite{Newton}.
The application to the drift-diffusion model is carried out in section
three. Although not explicitly considered, this result provides the
rigorous infrastructure for the 
Euler predictor-Newton corrector approximation method.
The homotopy curve emanates from the center of a ball of radius governed
by the problem data, and remains within this ball until the solution is
achieved.

We conclude the introduction with comments regarding the
analysis literature in this subject. The first rigorous study 
of the drift-diffusion semiconductor model appears
to be that of Mock \cite{Mock}. This was followed by \cite{Seidman}. A
framework for \cite{semicon} was contained in \cite{BJR}. An extensive
study appeared in \cite{Mark}. These studies are distinguished by the choice of
boundary conditions, domain geometry, physical parameters, and
recombination assumptions. 
For example, mixed boundary conditions, with polyhedral geometry, is apt
to reduce the expected solution regularity. 
The
reference \cite{Jerbook} includes these technical issues. 

The boundary conditions of the present
article are somewhat simplified for the semiconductor application.
However, they are realistic as applied to the Poisson-Nernst-Planck model
for ion channel transmission and electrochemical systems.   
For these applications, existence is less important than system response
to parameter variation and to analysis of specialized case studies. For
examples of the latter, see \cite{PJ, BCEJ}.

In summary, the model considered in this article is a basic scientific
model of drift-diffusion, without carrier recombination, and with 
convex bounded domains in Euclidean space of dimension $N =2,3$. As
formulated, the model is applicable to each of the three
application areas just referenced, when Dirichlet data are
appropriate. 

Finally, we observe that all of the analytical investigations 
cited above make use of
the Einstein relations, which are valid 
via arguments from statistical
mechanics. Although we do not assume either 
these relations, or Boltzmann
statistics,  
we do assume  
that the
physical region $\Omega$ has sufficiently small width, permitting an
effective application of the Poincar\'{e} inequality. 
Specifically, this
is used both in the $\lambda = \lambda_{0} = 0$ case and 
in the construction of inverses of the linearized equations. 
The width assumption does not restrict the domain diameter, 
either explicitly or implicitly. 
The use of the basic variables 
is consistent with the study \cite{PNPNS}, where multi-physics
effects are included to supplement the Poisson-Nernst-Planck model.

\section{Prior Results}
We begin with the assumptions for an existence theory.
\subsection{Existence of a homotopy solution curve in Banach space}
The hypotheses stated  here are slightly stronger than those presented
in \cite{Newton}, in order to facilitate verifiability. We will make a more
precise statement at the conclusion of the following definition.
\begin{definition}
\label{hypotheses}
Suppose that Banach spaces $X$ and $Z$, a connected, open subset $U$ of
$X$, and a bounded, connected  
closed subset $\Lambda $ of ${\mathbb R}$ are given, together with
a mapping, 
\begin{equation}
F: {\bar U} \times \Lambda \mapsto Z.
\end{equation}  
The following are assumed.
\begin{enumerate}
\item
The partial derivatives $F_{w}$ and $F_{\mu}$ are 
Lipschitz continuous in the operator norm. 
Specifically, there is a constant $C$ such that, globally,
\begin{eqnarray}
\|F_{w}(v, \lambda)  - F_{w}(w, \mu)\|_{B[X,Z]} 
&\leq& C  \{\|v - w\|_{X}^{2} + |\lambda - \mu|^{2}\}^{1/2}, \nonumber \\
\|F_{\mu}(v, \lambda)  - F_{\mu}(w, \mu)\|_{B[{\mathbb R},Z]}
&\leq& C  \{\|v - w\|_{X}^{2} + |\lambda - \mu|^{2}\}^{1/2}. 
\label{Lip}
\end{eqnarray} 
\item
$F_{w}(w, \mu)$ has an approximate inverse $H(w, \mu)$ 
satisfying, globally,
\begin{eqnarray}
\|F_{w}(w, \mu) H(w, \mu) z - z \|_{Z} &\leq& M \|F(w,\mu)\|_{Z}
\|z\|_{Z} \label{Inv},\\
\|H(w, \mu) z \|_{X} & \leq & M\|z \|_{Z}. 
\label{BInv}
\end{eqnarray}
\end{enumerate}
\end{definition}
\begin{remark}
The assumptions for $F_{w}$
and
$F_{\mu}$ imply the following remainder formula and estimate:
\begin{equation*}
F(w, \mu) =F(v, \lambda) +F_{w}(v, \lambda)(w - v) + F_{\mu}(v, \lambda) 
(\mu -\lambda) + R(v, w; \lambda, \mu), 
\end{equation*}
\begin{equation}
\label{QR}
\|R(v,w; \lambda, \mu)\|_{Z} \leq C [\|v -w\|_{X}^{2} + (\lambda -
\mu)^{2}].
\end{equation}
This follows from a direct estimate of the following representation
for the remainder.
\begin{equation*}
R(v,w; \lambda, \mu) = \int_{0}^{1} [F_{w}(v + s(w - v), \lambda)
-F_{w}(v, \lambda)] (w - v) \; ds
\;+
\end{equation*}
\begin{equation}
 \int_{0}^{1} [F_{\mu}(w, \lambda+t(\mu -\lambda))
- F_{\mu}(w, \lambda)] (\mu - \lambda) 
+ [F_{\mu}(w, \lambda)- F_{\mu}(v,\lambda)]  
(\mu - \lambda) \; dt.
\end{equation}
In \cite{Newton}, (\ref{QR}) is included in the hypotheses, whereas the
Lipschitz continuity of $F_{\mu}$ is not. Here, it is explicitly
included.
\end{remark}
We have the following.
\begin{theorem}
\label{exisgen}
Suppose that Banach spaces $X$ and $Z$, a closed interval $[\lambda_{0},
\lambda_{1}]$, a closed ball $B_{r} \subset X$, 
with interior $B_{r}^{o}$, 
and a mapping $F$ are given,
satisfying Definition \ref{hypotheses}. We suppose that 
(\ref{Lip}, \ref{Inv}, \ref{BInv})
of
Definition \ref{hypotheses} hold and also the following:

(i) $F(u_{0}, \lambda_{0}) = 0$, where $u_{0}$ is the center of $B_{r}$.

(ii) $r$ is sufficiently large relative to $M$ and $\lambda_{1} -
\lambda_{0}$:
\begin{equation}  
r \geq \max(C, M)(\lambda_{1} - \lambda_{0})\left[ 1 + 
\sup_{v, \lambda} \|F
_{\mu}(v, \lambda)\|_{B[{\mathbb R},Z]} \right].
\label{bigr}
\end{equation}
Then there exists a solution set $u = u(\lambda)$ such that 
\begin{equation}  
u(\lambda) \in B_{r}^{o}, \, F(u(\lambda), \lambda) = 0, \lambda \in
[\lambda_{0}, \lambda_{1}].
\end{equation}
\end{theorem}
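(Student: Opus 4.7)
The plan is to pursue a homotopy/continuation strategy built on a Davidenko-type differential equation. Formally differentiating $F(u(\lambda),\lambda)\equiv 0$ gives $u'=-F_{w}^{-1}F_{\mu}$; although $F_{w}^{-1}$ is unavailable, hypothesis (\ref{Inv}) has the crucial consequence that $H(w,\mu)$ becomes an \emph{exact} right inverse of $F_{w}(w,\mu)$ on the zero set $\{F=0\}$. This motivates the surrogate equation
\begin{equation}
u'(\lambda) = -H(u(\lambda),\lambda)\,F_{\mu}(u(\lambda),\lambda), \qquad u(\lambda_{0})=u_{0},
\label{plan:davidenko}
\end{equation}
and I would construct a solution to (\ref{plan:davidenko}) on $[\lambda_{0},\lambda_{1}]$ and then verify that it lies in $B_{r}^{o}$ and preserves $F(u(\lambda),\lambda)=0$.

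Granted such a trajectory, the two invariants fall out cleanly. For the residual, the chain rule yields
\[
\tfrac{d}{d\lambda}F(u(\lambda),\lambda) = F_{w}u' + F_{\mu} = (I-F_{w}H)F_{\mu},
\]
so with $K:=\sup\|F_{\mu}\|_{B[\mathbb{R},Z]}$, hypothesis (\ref{Inv}) applied to $z=F_{\mu}$ gives $\|\tfrac{d}{d\lambda}F(u,\lambda)\|_{Z}\leq MK\,\|F(u,\lambda)\|_{Z}$, and combined with $F(u_{0},\lambda_{0})=0$, Gronwall's inequality forces $F(u(\lambda),\lambda)\equiv 0$. For the ball estimate, (\ref{BInv}) produces
\[
\|u(\lambda)-u_{0}\|_{X}\leq \int_{\lambda_{0}}^{\lambda}M\|F_{\mu}(u(s),s)\|_{Z}\,ds \leq MK(\lambda_{1}-\lambda_{0}),
\]
which by (\ref{bigr}) is strictly smaller than $r$, placing the curve in $B_{r}^{o}$.

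The main obstacle is \emph{existence} of the trajectory: $H$ is not assumed continuous in $(w,\mu)$, so classical Picard iteration on (\ref{plan:davidenko}) is not directly available. My plan is to approximate via a discrete Euler-predictor/Newton-corrector scheme on a partition $\lambda_{0}<\lambda_{0}+h<\cdots<\lambda_{1}$, with predictor $u_{i}^{\mathrm{pred}}=u_{i-1}-hH(u_{i-1},\lambda_{i-1})F_{\mu}(u_{i-1},\lambda_{i-1})$ and quasi-Newton correction $u\mapsto u-H(u,\lambda_{i})F(u,\lambda_{i})$ iterated until the residual is negligible at each node. The quadratic remainder (\ref{QR}), combined with (\ref{Inv}) and (\ref{BInv}), bounds the predictor residual by $O(h^{2})$ and produces quadratic error reduction at each corrector step, so the iterates remain in $B_{r}$; here both $C$ and $M$ in (\ref{bigr}) enter ($C$ controls the remainder, $M$ bounds the correction). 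Piecewise-affine interpolation yields a family $\{u^{(n)}\}$ that is uniformly bounded and equi-Lipschitz, and a Cauchy-type comparison of the scheme at two step sizes (invoking (\ref{Lip})) delivers a continuous limit $u(\lambda)\in B_{r}^{o}$ with $F(u(\lambda),\lambda)=0$. The delicate point is that the accumulated discretization error over $O(1/h)$ steps must not drive the discrete trajectory out of $B_{r}$, which is precisely what the strict lower bound on $r$ in (\ref{bigr}) secures.
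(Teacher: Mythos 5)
Your skeleton---Euler predictor, approximate-Newton corrector driven by $H$, quadratic residual decay from (\ref{QR}) and (\ref{Inv}), step control from (\ref{BInv}), and the role of (\ref{bigr}) in keeping the discrete trajectory strictly inside $B_{r}$---is exactly the mechanism the paper relies on: its proof is by citation to Th.\ 4.1 of \cite{Newton}, sharpening two inequalities in that paper's chain (3.10) so that each predictor lies in $B_{r}^{o}$. The Davidenko/Gronwall discussion is harmless motivation, but it carries no logical weight since the trajectory of your surrogate ODE is never constructed; you correctly do not lean on it.

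The genuine gap is your final limit step. You propose to compare runs of the scheme at two step sizes and extract a limit curve ``invoking (\ref{Lip})''. But the hypotheses provide no modulus of continuity for $(w,\mu)\mapsto H(w,\mu)$---only the bounds (\ref{Inv}) and (\ref{BInv})---so the two discrete trajectories are driven by right inverses that cannot be compared, and the standard Euler consistency-plus-Gronwall comparison has nothing to work with; Lipschitz continuity of $F_{w}$ and $F_{\mu}$ does not repair this because the predictor direction is $-HF_{\mu}$, which (\ref{Lip}) does not control. Fortunately this step is unnecessary: the theorem asserts only a solution \emph{set}, pointwise in $\lambda$. For each fixed $\lambda\in[\lambda_{0},\lambda_{1}]$ choose a partition of $[\lambda_{0},\lambda]$ having $\lambda$ as terminal node, and at every node run the corrector to its limit rather than ``until the residual is negligible'': the corrector iterates are Cauchy because $\|u_{k+1}-u_{k}\|_{X}\leq M\|F(u_{k},\lambda_{i})\|_{Z}$ while $\|F(u_{k+1},\lambda_{i})\|_{Z}\leq (M+CM^{2})\|F(u_{k},\lambda_{i})\|_{Z}^{2}$, so the limit is an exact zero by continuity of $F$, and only then is your $O(h^{2})$ predictor-residual estimate at the next node legitimate (with an inexact corrector the per-node error must be summed over the $O(1/h)$ nodes and re-enters the consistency estimate). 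With that modification no limit in the step size is needed, and your ball estimate together with the strict margin supplied by (\ref{bigr}) already places every $u(\lambda)$ in $B_{r}^{o}$.
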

\begin{remark}
This result follows from \cite[Th.\ 4.1]{Newton} and its proof. 
The proof is based upon a predictor/corrector method which ensures that
the iterates remain in the radius of convergence of Newton's method; 
here, this means the interior of $B_{r}$.
The argument uses the chain of inequalities given by (3.10) of \cite{Newton}. 
However, for the purposes here, two of the inequalities in the chain 
need to be replaced by strict inequalities:
that preceding 
the term $\rho^{-1}/4$ and the final inequality preceding the
term $3r/4$.
When modified in this way, each predictor is seen to lie in the
interior of $B_{r}$.

Note that the viability of inequality (\ref{bigr}) requires the decoupling
of the norm of $F_{\mu}$ from the choice of $r$. This is the most serious
challenge to the application of this theory to systems of partial
differential equations. 

\end{remark}
\subsection{Surjectivity of partial derivatives}
An essential hypothesis of the theory is a parameter dependent approximate
right inverse for the derivative map. Here, we provide a result which
guarantees, for the application considered, the existence of a right
inverse. The following result is a concatenation of 
\cite[Th.\ 27.A, Prop.\ 27.6]{ZeidlerII} 
\begin{proposition}
\label{surj}
Suppose that $X$ is a reflexive Banach space and that 
$L: X \mapsto X^{\ast}$ is a given operator with decomposition
$L = L_{1} + L_{2}$, satisfying the
following conditions.
\begin{enumerate}
\item
$L_{1}$ is monotone and continuous.
\item
$L_{2}$ is strongly continuous.
\item
$L$ is bounded.
\item
$L$ is coercive.
\end{enumerate}
Then
$L$ is surjective.
\end{proposition}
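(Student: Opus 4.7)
The plan is to reduce the proposition to the Brezis/Browder surjectivity theorem for coercive, bounded, pseudomonotone operators on a reflexive Banach space. So the two things to check are (a) the decomposition $L = L_1 + L_2$ produces a pseudomonotone operator and (b) the standard pseudomonotone surjectivity machinery applies under the remaining hypotheses (boundedness, coercivity, reflexivity).

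For (a), I would first observe that the continuity of $L_1$ implies hemicontinuity, and a monotone, hemicontinuous operator from a reflexive Banach space into its dual is pseudomonotone in the sense of Brezis: if $u_n \rightharpoonup u$ and $\limsup \langle L_1 u_n, u_n - u\rangle \le 0$, then one recovers $\liminf \langle L_1 u_n, u_n - v\rangle \ge \langle L_1 u, u - v\rangle$ by the Minty trick (test monotonicity against $u + t(u - v)$, let $t \downarrow 0$, and use hemicontinuity). The strong continuity of $L_2$ immediately gives pseudomonotonicity: $u_n \rightharpoonup u$ already forces $L_2 u_n \to L_2 u$ strongly in $X^\ast$. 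Since the sum of two pseudomonotone operators is pseudomonotone (a direct $\liminf/\limsup$ bookkeeping), $L$ is pseudomonotone.

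For (b), the classical Galerkin argument does the work. I would fix $f \in X^\ast$ and, for each finite-dimensional subspace $X_n$ in an increasing exhausting family, use coercivity together with the Brouwer fixed-point theorem (in its ``acute-angle'' form) to produce $u_n \in X_n$ with $\langle L u_n - f, v\rangle = 0$ for all $v \in X_n$. Coercivity bounds $\|u_n\|_X$ uniformly, so reflexivity yields a weakly convergent subsequence $u_n \rightharpoonup u$. Testing the Galerkin identity against $u_n$ and exploiting density of $\bigcup X_n$ plus boundedness of $L$ to approximate $u$, one shows $\limsup \langle L u_n, u_n - u\rangle \le 0$; pseudomonotonicity then upgrades this to $\langle L u, u - v\rangle \le \liminf \langle L u_n, u_n - v\rangle = \langle f, u - v\rangle$ for every $v \in X$, whence $L u = f$.

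The main obstacle, and the only place where the full strength of the hypotheses is really used, is this final passage to the limit: without pseudomonotonicity one could extract only $L u_n \rightharpoonup g$ for some $g \in X^\ast$ and would have no way to identify $g$ with $L u$. Everything else (Galerkin solvability in finite dimension, the uniform a priori bound, weak compactness) is essentially mechanical once coercivity, boundedness, and reflexivity are in hand.
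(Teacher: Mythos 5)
Your proposal is correct and follows exactly the route the paper takes: the paper gives no proof of its own but cites Zeidler's Proposition~27.6 (a monotone, hemicontinuous operator plus a strongly continuous operator is pseudomonotone) together with Theorem~27.A (the Brezis surjectivity theorem for bounded, coercive, pseudomonotone operators on a reflexive space), and your argument is precisely a sketch of the proofs of those two cited results, via the Minty trick and the Galerkin/Brouwer limit passage. The only cosmetic caveat is that for a non-separable reflexive $X$ the Galerkin family must be the net of all finite-dimensional subspaces rather than a countable exhausting sequence, which is immaterial for the paper's application to $H^{1}_{0}(\Omega)$.
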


\section{Existence of a Homotopy Curve for Drift-Diffusion Systems}
We apply the results of the preceding section to construct a solution
curve.
We begin by writing the system in strong form with parametric dependence. 
The dependent variables are electrostatic potential, charge density 
for negative carriers, and charge density for positive carriers,
denoted by $u, n, p$, resp. Carrier transport is not assumed to 
satisfy the Einstein relations. Boltzmann statistics are not assumed.
Moreover, additional carriers are easily accommodated if desired.
The system is given as follows. In the following section,
we will provide a precise definition of solution, which in this article is
defined as a strong solution for $u$ and a weak solution for $n$ and $p$.
\begin{eqnarray}
-\nabla^{2} u &=& \lambda (p - n) + D, \nonumber \\
-\nabla \cdotp ( d_{n}\nabla n - c_{n}n \nabla u) 
&= 0,& \nonumber \\ 
-\nabla \cdotp (d_{p}\nabla p + c_{p}p \nabla u) 
&= 0.&  
\label{sys9}
\end{eqnarray}
Here, $d_{n}, c_{n}, d_{p}, c_{p}$ are positive constants, related to 
diffusion and carrier drift, resp. In the first equation, $D$ represents
so-called permanent charge, 
and is an arbitrary $L^{\infty}$ function. 
Also, units in this equation are chosen so that
the ratio of charge modulus to dielectric constant is one.
Dirichlet boundary conditions are imposed as
follows, via the boundary trace operator $\Gamma$, for given functions 
$a_{u}, a_{n}, a_{p}$.
\begin{equation}
\Gamma u = 
\Gamma a_{u},
\Gamma  n = \Gamma a_{n},
\Gamma  p = \Gamma a_{p}.
\end{equation}
The domain $\Omega \subset {\mathbb R}^{N}$ is an open
convex bounded set. 
Here, $N = 2,3$.
The parameter $\lambda$
ranges between $0, 1$. 
\subsection{The fundamental mapping $F$}
We begin with the component definitions of $F$. 
\begin{definition}
\label{defF}
Set ${\mathcal H} = (H^{2}(\Omega) \cap H^{1}_{0}(\Omega)) \times
H^{1}_{0} (\Omega) \times H^{1}_{0}(\Omega)$ and  
${\mathcal G} = L^{2}(\Omega) \times H^{-1}(\Omega) \times H^{-1}(\Omega)$. 
Arbitrary elements of ${\mathcal H}$ are designated as 
$(\rho, \sigma, \tau)$. 
Standard norms are employed, except for the equivalent norm in 
$H^{1}_{0}$: $\|f\|_{H^{1}_{0}} = \|\nabla f\|_{L^{2}}$. 
Then 
$F: {\mathcal H} \times [0, 1] \mapsto 
{\mathcal G}$
 is defined via its components as follows.
Here, $(u, n, p) := (\rho, \sigma, \tau) + (a_{u}, a_{n}, a_{p})$ for a
fixed specified function $(a_{u}, a_{n}, a_{p}) \in 
H^{2} \times H^{1} \times H^{1}$.
\begin{eqnarray}
F_{1}(\rho, \sigma, \tau; \lambda) &=& 
-\nabla^{2} u + \lambda (n - p) - D, \nonumber \\
F_{2}(\rho, \sigma, \tau; \lambda) &=& 
-\nabla \cdotp ( d_{n}\nabla n - c_{n}n \nabla u),
\nonumber \\ 
F_{3}(\rho, \sigma, \tau; \lambda) &=& 
-\nabla \cdotp (d_{p}\nabla p + c_{p}p \nabla u).  
\label{sys11}
\end{eqnarray}

$F_{1}$ has natural range in $L^{2}$.
The precise (standard) meaning of the action of 
$F_{2}, F_{3}$ is given by the
following. In describing this action, we write an
arbitrary vector test function  for 
the final two components of ${\cal G}$ as $(\phi, \psi)$.
We have:
\begin{eqnarray}
F_{2}(\rho, \sigma, \tau; \lambda)[\phi]  &=& 
\int_{\Omega}[( d_{n}\nabla n - c_{n}n \nabla u)\cdotp \nabla \phi
] \; dx, \nonumber \\ 
F_{3}(\rho, \sigma, \tau; \lambda)[\psi] &=& 
\int_{\Omega}[( d_{p}\nabla n + c_{p}n \nabla u)\cdotp \nabla \psi
] \; dx. 
\end{eqnarray}
By a homotopy solution curve is meant a set 
$\{h_{\lambda} =(\rho_{\lambda}, \sigma_{\lambda}, \tau_{\lambda})\} 
\subset {\mathcal H}, 0 \leq \lambda \leq 1$, such that
$F(h_{\lambda}, \lambda) = 0, \; \forall \lambda$.
\end{definition}
\begin{remark}
As noted earlier, we will place restrictions on the width of the convex
domain $\Omega$. We use the terminology of \cite[p.\ 359]{Leoni}. A domain
$\Omega$ has finite width $d$ if the latter is the smallest positive
number such that $\Omega$ lies between two parallel planes separated by a
distance $d$.
\end{remark}
\begin{lemma}
For each fixed $\lambda \in [0, 1]$, the mapping $F$ is well-defined from 
${\mathcal H}$ to ${\mathcal G}$.
\end{lemma}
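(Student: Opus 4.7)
The plan is to verify each of the three components $F_1, F_2, F_3$ separately and check that it lands in the prescribed target space ($L^2$, $H^{-1}$, $H^{-1}$), using only the regularity of the arguments and Sobolev embeddings valid on bounded domains in $\mathbb{R}^N$ with $N=2,3$. Since $\rho \in H^2 \cap H^1_0$ and $a_u \in H^2$, we have $u \in H^2(\Omega)$; since $\sigma,\tau \in H^1_0$ and $a_n,a_p \in H^1$, we have $n,p \in H^1(\Omega)$, and in particular $\nabla u \in H^1(\Omega;\mathbb{R}^N)$.

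The component $F_1$ is immediate: $-\nabla^2 u \in L^2$ from $u \in H^2$; $\lambda(n-p) \in L^2$ via the inclusion $H^1 \hookrightarrow L^2$; and $D \in L^\infty(\Omega) \subset L^2(\Omega)$ since $\Omega$ is bounded. For $F_2$ (and $F_3$ by symmetry), I would show that $\phi \mapsto \int_\Omega (d_n \nabla n - c_n n \nabla u) \cdot \nabla \phi \, dx$ defines a bounded linear functional on $H^1_0(\Omega)$. The diffusion contribution is controlled by $d_n \|\nabla n\|_{L^2} \|\nabla \phi\|_{L^2}$. The drift contribution reduces to showing $n \nabla u \in L^2(\Omega;\mathbb{R}^N)$; by H\"older,
\begin{equation*}
\|n \nabla u\|_{L^2} \leq \|n\|_{L^4} \|\nabla u\|_{L^4},
\end{equation*}
and the Sobolev embedding $H^1(\Omega) \hookrightarrow L^4(\Omega)$ holds for $N = 2, 3$ on bounded domains (routed through $L^6$ when $N=3$), so both factors are finite. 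This yields a bound of the form $C\|n\|_{H^1}\|u\|_{H^2}\|\nabla \phi\|_{L^2}$, producing the required element of $H^{-1}(\Omega)$.

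The only step with substantive content is this H\"older/Sobolev estimate for the drift terms, and the restriction $N \leq 3$ is precisely what makes the product $H^1 \cdot H^1$ sit inside $L^2$. All remaining checks amount to reading off memberships from the definitions, so no serious obstacle is expected; the lemma should follow from a short calculation organized along these lines.
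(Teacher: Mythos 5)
Your proposal is correct and follows essentially the same route as the paper: the $F_1$ component is read off from the regularity of $u$, $n$, $p$, and $D$, while for $F_2$, $F_3$ the drift term is handled by H\"older together with the Sobolev embedding $H^{1}\hookrightarrow L^{4}$ (valid for $N\leq 3$, indeed up to dimension four), giving a bound by the $H^{1}_{0}$ norm of the test function. Your write-up simply makes the paper's brief estimate explicit; no gap.
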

\begin{proof}
The analysis for $F_{1}$ is clear. We consider $F_{2}, F_{3}$.
Each integral in the definition has a product integrand which is $L^{1}$,
as seen by the H\"{o}lder inequality. Moreover, the corresponding estimate
includes a factor dominated  by the $H^{1}_{0}$ norm of the test function. 
Note that in Euclidean dimension not
exceeding four, an $H^{1}$ function is in $L^{4}$ \cite{Adams}. 
This permits the
conclusion.
\end{proof}
\subsection{Solution at $\lambda = 0$}
We analyze now the $\lambda = 0$ case. The system decouples into three
independent linear equations. We have the following. 
\begin{proposition}
There is a number $d_{0}$ such that for domain width $d < d_{0}$, the
system for $\lambda = 0$ has a solution. More precisely, 
there is an element $h_{0} \in {\mathcal H}$ satisfying
$F(h_{0},0) = 0$.
Moreover, we have the explicit formula,
\begin{equation}
d_{0}^{2} = 4\frac{1}{\|D\|_{L^{\infty}}
+\|a_{u}\|_{L^{\infty}}} \min\left(\frac{d_{n}}{c_{n}}, 
\frac{d_{p}}{c_{p}}\right). 
\label{d0}
\end{equation}
Finally, the solution is unique if $d < d_{0}$.
\end{proposition}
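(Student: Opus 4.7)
The system at $\lambda=0$ is triangular: the Poisson equation for $u$ decouples from the carrier equations, while the continuity equations for $n$ and $p$ become linear drift-diffusion problems once $u$ is known. My plan is to solve these in sequence, using classical elliptic theory together with the narrow-width Poincar\'e inequality to secure coercivity of the carrier bilinear forms.

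For the Poisson step I would set $\rho := u - a_u$ and recast the PDE as $-\Delta\rho = D + \Delta a_u \in L^2(\Omega)$ with homogeneous Dirichlet data. Lax--Milgram gives $\rho \in H^1_0$, and convexity of $\Omega$ then supplies the $H^2$ regularity (Grisvard), yielding $u \in H^2(\Omega)$ with the required trace. For the $n$-equation I would next set $\sigma := n - a_n \in H^1_0$ and study the weak form $a(\sigma,\phi) = \ell(\phi)$ on $H^1_0$, where
\[
a(\sigma,\phi) := d_n\!\int_\Omega\! \nabla\sigma\cdot\nabla\phi\,dx - c_n\!\int_\Omega\! \sigma\nabla u\cdot\nabla\phi\,dx,
\]
and $\ell$ is a bounded functional on $H^1_0$ by H\"older's inequality and the embedding $H^1\hookrightarrow L^4$ (valid for $N\leq 4$). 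The key computation is $\phi=\sigma$: using $2\sigma\nabla\sigma = \nabla(\sigma^2)$ together with $\sigma^2\in W^{1,1}_0$ as a test function for $-\Delta u = D$ yields
\[
\int_\Omega \sigma\,\nabla u\cdot\nabla\sigma\,dx \;=\; \tfrac{1}{2}\int_\Omega \nabla u\cdot\nabla(\sigma^2)\,dx \;=\; \tfrac{1}{2}\int_\Omega D\,\sigma^2\,dx,
\]
so $a(\sigma,\sigma) \geq d_n\|\nabla\sigma\|_2^2 - \tfrac{c_n}{2}\|D\|_{L^\infty}\|\sigma\|_2^2$. The width-Poincar\'e inequality then bounds $\|\sigma\|_2$ by a constant multiple of $d\,\|\nabla\sigma\|_2$, making the form coercive once $d$ is small relative to $d_n/c_n$. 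Existence and uniqueness of $\sigma$ follow either from Lax--Milgram directly or, in the framework of Proposition \ref{surj}, from $L_1 := -d_n\Delta$ (monotone, continuous, coercive) and $L_2\sigma := c_n\sigma\nabla u$ (strongly continuous via Rellich compactness of $H^1_0\hookrightarrow L^4$). The $p$-equation will be handled identically; the drift sign reversal only flips a sign on the $D$-integral, which is still controlled by $\|D\|_{L^\infty}$.

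The main obstacle I anticipate is calibrating constants to recover \eqref{d0} exactly, particularly the $\|a_u\|_{L^\infty}$ contribution to the denominator, which does not emerge from the direct integration by parts above. I expect this term to enter through a maximum-principle estimate of the form $\|u\|_{L^\infty}\le \|a_u\|_{L^\infty}+C(d)\|D\|_{L^\infty}$, used to bound the drift pointwise in lieu of invoking $-\Delta u = D$ distributionally, combined with a sharp width-Poincar\'e constant and taking the minimum of the coercivity thresholds for the $n$- and $p$-equations. Uniqueness of each of the three components is automatic once the coercivity of the respective bilinear form has been established.
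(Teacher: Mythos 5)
Your proposal follows essentially the same route as the paper: decouple the $\lambda=0$ system, solve the Poisson equation with $H^{2}$ regularity from convexity of $\Omega$, and handle the carrier equations through coercivity of the drift--diffusion form obtained from integration by parts plus the narrow-width Poincar\'{e} inequality, with existence via Lax--Milgram or, exactly as in the paper, via Proposition \ref{surj} with the same splitting $L_{1}=-d_{n}\Delta$, $L_{2}=\nabla\cdot(c_{n}\sigma\nabla u)$. Your identity $\int_{\Omega}\sigma\,\nabla u\cdot\nabla\sigma\,dx=\tfrac12\int_{\Omega}D\,\sigma^{2}\,dx$ is in fact a sharper form of the paper's estimate (which is stated with $\|D+\nabla^{2}a_{u}\|_{L^{\infty}}$), so it yields coercivity for every $d<d_{0}$; the difficulty you flag about recovering the $\|a_{u}\|_{L^{\infty}}$ term in (\ref{d0}) reflects looseness in the stated constant rather than a gap in your argument.
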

\begin{proof}
We notice that (\ref{d0}) makes use of the fact that, for $N \leq 3$, 
$H^{2}$ functions are bounded.
Now the first equation of (\ref{sys9}) is a Poisson equation:
\begin{equation}
-\nabla^{2} \rho = D + \nabla^{2} a_{u}, 
\end{equation}
subject to homogeneous boundary values. The results of \cite{VA} imply the
existence of a solution $\rho \in H^{2}$. Standard methods imply
uniqueness.

The second equation of (\ref{sys9}) can be rewritten as
\begin{equation}
-\nabla \cdotp ( d_{n}\nabla \sigma - c_{n}\sigma \nabla u)
=
\nabla \cdotp ( d_{n}\nabla a_{n} - c_{n}a_{n} \nabla u)
\end{equation}
The rhs of this equation is identified with a member of $H^{-1}$.
It follows that the equation is satisfied if we can prove surjectivity of
the mapping defined by the lhs.
To achieve this, we use Proposition \ref{surj}, with 
\begin{equation}
L_{1} = -\nabla \cdotp  d_{n}\nabla \sigma, 
L_{2} = \nabla \cdotp  c_{n}\sigma \nabla u.
\label{surj2}
\end{equation} 
The norm properties imply that $L_{1}$ is monotone and continuous.
We next consider the operator $L_{2}$. 
 Suppose that $\sigma_{k}$ converges weakly to $\sigma$ in
$H^{1}_{0}$. By the Sobolev compact  embedding theorem, 
$\sigma_{k}$ converges  to $\sigma$ in $L^{p}, 1 \leq p < 6$.
We claim that the vector sequence $ \sigma_{k} \nabla u$ converges to
$\sigma \nabla u$ in $L^{2}$. 
This is demonstrated directly by the Schwarz inequality. Once established,
we use the following characterization of the $H^{-1}$ norm:
\begin{equation}
\|L_{2}(\sigma_{k} - \sigma) \|_{H^{-1}} = 
\sup_{\|\phi\|_{H^{1}_{0}}\leq 1} \int_{\Omega} \nabla u (\sigma_{k} - \sigma)
\cdotp \nabla \phi \; dx. 
\end{equation}
If the Schwarz inequality is applied to this expression, then it follows
from the preceding that the sequence 
$L_{2}{\sigma_{k}}$ converges to $L_{2}\sigma$.
Strong continuity follows.

The third condition, coerciveness of $L$, follows directly from the following
inequality \cite[p.\ 501]{ZeidlerII}:
\begin{equation}
\label{coe}
\langle L \sigma, \sigma \rangle \geq
C(\sigma, \sigma)_{H^{1}_{0}}  
\end{equation}
To derive (\ref{coe}), 
we reason as follows. 
\begin{eqnarray}
\langle L \sigma, \sigma \rangle 
&=& 
d_{n}\|\sigma\|_{H^{1}_{0}}^{2} + c_{n}\frac{1}{2}\int_{\Omega} \nabla u \nabla
\sigma^{2} \; dx  
\nonumber \\
&\geq & 
d_{n}\|\sigma\|_{H^{1}_{0}}^{2} 
-\frac{c_{n}}{2} \|D+\nabla^{2} a_{u}\|_{L^{\infty}}  
\|\sigma\|_{L^{2}}^{2} \nonumber \\
&\geq&
d_{n}\|\sigma\|_{H^{1}_{0}}^{2} 
-(d^{2}/2) \frac{c_{n}}{2} \|D+\nabla^{2} a_{u}\|_{L^{\infty}}  
\|\nabla \sigma\|_{L^{2}}^{2}, 
\end{eqnarray}
where we have used the Poincar\'{e} inequality.
Inequality (\ref{coe}) now follows for any $d < d_{0}$. 

Finally, we establish the boundedness of $L$. Suppose that ${\mathcal B}$
is bounded in $H^{1}_{0}$, with $\| \sigma \|_{H^{1}_{0}} \leq \beta$.
We obtain an upper bound for the norm of $L \sigma$ in $H^{-1}$.
The first term satisfies 
$ \|L_{1} \sigma\|_{H^{-1}} \leq d_n \beta$.
The estimation of $ \|L_{2} \sigma\|_{H^{-1}}$ involves two terms,
resulting from the application of the divergence operator.
\begin{itemize}
\item
The term involving $\nabla^{2} u$, which is an $L^{\infty}$ function, 
 is estimated as 
\begin{equation}
c_{n} \{\sup_{\|\phi\|_{H^{1}_{0}} \leq 1} \int_{\Omega} \nabla^{2}u \sigma
\phi \; dx \}
\leq c_{n} \|\nabla^{2} u\|_{L^{\infty}} \|\sigma\|_{L^{2}}
\|\phi\|_{L^{2}}.
\end{equation}
An application of the Poincar\'{e} inequality to each of the terms
involving $\sigma$ and $\phi$ 
completes the analysis of this term.
\item
The term including $\nabla \sigma$ follows a similar pattern. We begin with 
\begin{equation}
c_{n}\{\sup_{\|\phi\|_{H^{1}_{0}} \leq 1} \int_{\Omega} \nabla u 
\cdotp \nabla \sigma
\phi \; dx \}
\leq c_{n} \|\nabla u\|_{L^{4}} \|\nabla \sigma\|_{L^{2}}
\|\phi \|_{L^{4}},
\end{equation}
followed by Sobolev's  inequality. We obtain a similar upper bound for
this term.
\end{itemize}
The term involving the positive ion current is handled identically.
This concludes the proof of existence. 

To establish uniqueness, suppose that two solutions $n_{1}, n_{2}$ exist
for the second equation of the system, and set $\sigma = n_{1} - n_{2}$.  
By the above arguments, (\ref{coe}) is shown to hold for $d < d_{0}$, so
that $\sigma = 0$. A similar argument holds for the third component.
\end{proof}
\subsection{Fr\'{e}chet differentiability}
In this section, we establish, in the sense of  Fr\'{e}chet 
differentiability, the existence of the partial 
derivatives $F_{h}$ and $F_{\lambda}$. Moreover, $F_{h}$ and $F_{\lambda}$
are continuous so that $F^{\prime}$ exists and is continuous.
We employ the notation $h = (\rho, \sigma, \tau)$ for 
an arbitrary element of ${\mathcal H}$ at which the Fr\'{e}chet derivative
is computed. 
The variables acted upon by $F_{h}$ are designated by 
$h^{\prime} = (\rho^{\prime}, \sigma^{\prime}, \tau^{\prime})$.
In the process, 
we use some general results, documented in
\cite{ZeidlerI}. The structure of the mapping $F$ is such that 
one  may begin with the G\^{a}teaux derivative. Because of directional
uniformity, it will be seen that these partial derivatives are Fr\'{e}chet
derivatives \cite[Prop.\ 4.8]{ZeidlerI}. Further, it will be shown that the 
partial derivatives are
continuous in the joint variable $(h, \lambda)$, so that $F^{\prime}(h,
\lambda)$
is continuously Fr\'{e}chet differentiable \cite[Prop.\ 4.14]{ZeidlerI}.

\begin{lemma}
\label{Fh}
The partial derivative $F_{h}$ exists as a 
Fr\'{e}chet derivative 
which is continuous in the variable $(h, \lambda)$. 
Its evaluation, as a linear operator applied to 
the variables $\rho^{\prime}, \sigma^{\prime}, \tau^{\prime}$, 
is 
given as follows:  
\begin{eqnarray}
&F_{1, h}(\rho^{\prime}, \sigma^{\prime}\, \tau^{\prime})  = 
-\nabla^{2} \rho^{\prime}  + 
\lambda (\sigma^{\prime} - \tau^{\prime}),
\nonumber \\
&F_{2, h}(\rho^{\prime}, \sigma^{\prime}, \tau^{\prime})[\phi]  = 
\int_{\Omega}[( d_{n}\nabla \sigma^{\prime} - c_{n}\sigma^{\prime} \nabla u
- c_{n}n \nabla \rho^{\prime})\cdotp \nabla \phi]
 \; dx, \nonumber \\
&F_{3, h}(\rho^{\prime}, \sigma^{\prime}, \tau^{\prime})[\psi]  = 
\int_{\Omega}[( d_{p}\nabla \tau^{\prime} +c_{p} \tau^{\prime} \nabla u  
+ c_{p}p \nabla \rho^{\prime})\cdotp \nabla \psi
] \; dx.
\label{der1}
\end{eqnarray}
As previously,  $(u, n, p) := (\rho, \sigma, \tau) + (a_{u}, a_{n},
a_{p})$.
\end{lemma}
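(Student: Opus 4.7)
The plan is to follow the route suggested by the paper: first exhibit the G\^ateaux derivative by a direct difference-quotient calculation, then show that this derivative is a bounded linear operator in $B[{\mathcal H},{\mathcal G}]$ depending continuously on $(h,\lambda)$, and finally invoke \cite[Prop.\ 4.8, Prop.\ 4.14]{ZeidlerI} to upgrade it to a continuous Fr\'echet derivative. The only genuinely nonlinear terms in $F$ are the drift products $c_{n} n\nabla u$ and $c_{p}p\nabla u$; since $n=\sigma+a_{n}$, $p=\tau+a_{p}$, $u=\rho+a_{u}$, each of these is a bilinear form in $(\rho,\sigma)$ or $(\rho,\tau)$ plus affine contributions. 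Expanding in the increment $h^{\prime}=(\rho^{\prime},\sigma^{\prime},\tau^{\prime})$ and dividing by $t$ yields, at $t\to 0$, precisely the formulas in (\ref{der1}); the $\lambda$-dependent term in $F_{1}$ contributes the linear piece $\lambda(\sigma^{\prime}-\tau^{\prime})$.

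Next I would verify that each $F_{i,h}(h,\lambda)$ lies in $B[{\mathcal H},{\mathcal G}]$. The components $F_{1,h}$ and the linear pieces of $F_{2,h}, F_{3,h}$ are handled exactly as in the well-definedness lemma. The only new ingredient is the mixed term, say $c_{n}n\,\nabla\rho^{\prime}$ in $F_{2,h}$: here I would dualize against $\phi\in H^{1}_{0}$ and use the H\"older chain
\begin{equation*}
\Bigl|\int_{\Omega} n\,\nabla\rho^{\prime}\cdot\nabla\phi\,dx\Bigr|
\le \|n\|_{L^{4}}\|\nabla\rho^{\prime}\|_{L^{4}}\|\nabla\phi\|_{L^{2}},
\end{equation*}
invoking the Sobolev embeddings $H^{1}\hookrightarrow L^{4}$ and $H^{2}\hookrightarrow W^{1,4}$, valid for $N\le 3$. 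The analogous estimate with $\sigma^{\prime}\nabla u$ uses $\|\sigma^{\prime}\|_{L^{4}}\|\nabla u\|_{L^{4}}\|\nabla\phi\|_{L^{2}}$; the symmetric argument covers $F_{3,h}$.

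For continuity of $(h,\lambda)\mapsto F_{h}(h,\lambda)$ in the operator norm, the $\lambda$-dependence of $F_{1,h}$ is affine in $\lambda$ and independent of $h$, hence trivially Lipschitz. For $F_{2,h}$, I would write the difference
\begin{equation*}
\bigl[F_{2,h}(h_{1},\lambda)-F_{2,h}(h_{2},\lambda)\bigr]h^{\prime}[\phi]
=-c_{n}\!\int_{\Omega}\!\bigl[\sigma^{\prime}\nabla(\rho_{1}-\rho_{2})+(\sigma_{1}-\sigma_{2})\nabla\rho^{\prime}\bigr]\!\cdot\!\nabla\phi\,dx
\end{equation*}
and apply the same H\"older/Sobolev chain to bound its operator norm by $C\|h_{1}-h_{2}\|_{\mathcal H}$; the same structure works for $F_{3,h}$. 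This yields Lipschitz continuity of $F_{h}$ in $(h,\lambda)$, which by \cite[Prop.\ 4.8]{ZeidlerI} implies that the G\^ateaux derivative is in fact the Fr\'echet derivative, and by \cite[Prop.\ 4.14]{ZeidlerI} that $F$ is of class $C^{1}$ on ${\mathcal H}\times[0,1]$.

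The main obstacle is not conceptual but one of bookkeeping: one must consistently pair the regularities so that each trilinear integral in the remainder and in the operator-norm difference distributes as $L^{4}\times L^{4}\times L^{2}$, using the fact that $\rho\in H^{2}$ gives $\nabla\rho\in L^{4}$ while $\sigma,\tau\in H^{1}$ only give themselves in $L^{4}$. Dimension $N\le 3$ is exactly what makes these embeddings available, and no Poincar\'e-type width restriction is needed at this step; the width assumption is only used later in the surjectivity/inversion analysis.
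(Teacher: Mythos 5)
Your proposal is correct and follows essentially the same route as the paper: compute the G\^ateaux partial derivatives from the (at most bilinear) structure of the drift terms, bound and control the resulting linear operators by duality with H\"older and the Sobolev embeddings $H^{1}\hookrightarrow L^{4}$, $H^{2}\hookrightarrow W^{1,4}$, and then upgrade to a continuous Fr\'echet derivative via \cite[Prop.\ 4.8, Prop.\ 4.14]{ZeidlerI}. You simply spell out the operator-norm estimates that the paper's proof leaves implicit, which is consistent with (and anticipates) the Lipschitz estimates used later in Proposition \ref{Lipw}.
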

\begin{proof}
The most efficient way to verify the validity of the system 
(\ref{der1}) is to compute the
G\^{a}teaux derivatives with respect to the components of $h$; then, 
verify sufficient conditions
for Fr\'{e}chet differentiability. 
Finally, the
partial derivatives with respect to the elements of $h$ can be summed to
obtain (\ref{der1}) because of the continuity of the associated component
derivatives. 

The G\^{a}teaux partial derivatives are directional derivatives in the
directions $\rho^{\prime}, \sigma^{\prime}$, and $\tau^{\prime}$. 
Because of the linear structure of the
variables, the difference quotients reduce to the derivatives
straightforwardly. Summation yields (\ref{der1}).  
Continuity of $F_{h}$ is a direct consequence of the linearity 
of the variables associated with $(h, \lambda)$. 
Indeed, the duality norm, coupled with H\"{o}lder's inequality, yields the
result.
\end{proof}

There is a parallel result for $F_{\lambda}$. The proof is similar
to the preceding lemma so that we omit the proof. 
\begin{lemma}
\label{Flam}
The partial derivative $F_{\lambda}$ exists as a 
Fr\'{e}chet derivative 
which is continuous in the variable $(h, \lambda)$. 
Its evaluation, as a linear operator applied to the real variable
$\theta$,  
is given as follows:  
\begin{eqnarray}
F_{1, \lambda}(\theta)  &=& 
 \theta (n  - p),
 \nonumber \\
F_{2, \lambda}(\theta)  &=& 
0 \nonumber \\ 
F_{3, \lambda}(\theta)  &=& 
0. \label{Fdersys}
\end{eqnarray}
\end{lemma}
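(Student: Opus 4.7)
The plan is to mirror the strategy of Lemma \ref{Fh}: compute the G\^{a}teaux derivative with respect to $\lambda$ along a scalar direction $\theta$, upgrade to Fr\'{e}chet via the affine structure in $\lambda$, and then verify joint continuity of $F_\lambda$ as an element of $B[\mathbb{R}, \mathcal{G}]$. The crucial observation driving the entire argument is that $\lambda$ enters $F$ only through the term $\lambda(n - p)$ in the first component; the components $F_2$ and $F_3$ have no $\lambda$ dependence whatsoever, so their directional derivatives in $\lambda$ vanish identically.

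For the G\^{a}teaux step, I would fix $h = (\rho, \sigma, \tau)$ and $\lambda \in [0,1]$ and form, for small $t \neq 0$,
\[
t^{-1}\bigl[F(h, \lambda + t\theta) - F(h, \lambda)\bigr].
\]
The $F_2$ and $F_3$ components of this quotient are identically zero. For $F_1$, the numerator equals exactly $t\theta(n - p)$, so the quotient is $\theta(n - p) \in L^{2}(\Omega)$ independently of $t$. This establishes (\ref{Fdersys}). The upgrade from G\^{a}teaux to Fr\'{e}chet is then automatic, since the remainder
\[
F(h, \lambda + \theta) - F(h, \lambda) - \theta\bigl(n - p,\; 0,\; 0\bigr)
\]
vanishes identically for every $\theta \in \mathbb{R}$; there is nothing of order $o(|\theta|)$ to bound.

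For continuity in $(h, \lambda)$, I would note that $F_\lambda(h, \lambda)$ is independent of $\lambda$ and that its norm satisfies
\[
\|F_\lambda(h, \lambda)\|_{B[\mathbb{R}, \mathcal{G}]} \leq \|n - p\|_{L^{2}(\Omega)}.
\]
Writing $(n,p) = (\sigma + a_n, \tau + a_p)$, continuity in $h$ reduces to continuity of the linear map $(\sigma, \tau) \mapsto \sigma - \tau$ from $H^{1}_{0}(\Omega) \times H^{1}_{0}(\Omega)$ into $L^{2}(\Omega)$, which follows from the Poincar\'{e} inequality (equivalently, from the continuous embedding $H^{1}_{0} \hookrightarrow L^{2}$). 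There is no genuine obstacle: because $F$ is affine in $\lambda$, every estimate that would normally require a Taylor-type argument is exact, which is presumably why the authors chose to omit the proof. The only item requiring mild care is confirming that the identification of $F_\lambda$ with multiplication by $(n-p, 0, 0)$ is compatible with the product norm chosen on $\mathcal{G}$, but any of the standard equivalent product norms yields the same conclusion.
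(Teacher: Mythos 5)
Your proof is correct and follows essentially the route the paper intends: the paper omits the argument, stating it is parallel to Lemma \ref{Fh}, namely compute the G\^{a}teaux derivative in $\lambda$ (which is exact because $F$ is affine in $\lambda$), upgrade to the Fr\'{e}chet derivative, and obtain joint continuity from the embedding $H^{1}_{0}(\Omega) \hookrightarrow L^{2}(\Omega)$ applied to $n-p$. No gaps to report.
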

\subsection{Lipschitz continuity for derivatives}
We begin with the statement of Lipschitz continuity. 
\begin{proposition}
\label{Lipw}
The derivative mapping $F_{h}$ is globally 
Lipschitz continuous in the variable $(h, \lambda)$. 
\end{proposition}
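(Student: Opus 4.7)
My plan is to analyze the difference $F_{h}(h,\lambda)-F_{h}(\bar h,\bar\lambda)$ componentwise, exploiting the fact that the formulas in (\ref{der1}) are \emph{linear} in the data $(h,\lambda)$. Consequently, the operator-norm difference will depend linearly (not merely Lipschitz-continuously) on $\|h-\bar h\|_{\mathcal H}$ and $|\lambda-\bar\lambda|$, which is more than enough to produce the bound (\ref{Lip}).

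For the first component, $F_{1,h}(h,\lambda)-F_{1,h}(\bar h,\bar\lambda)$ reduces to the scalar-weighted identity $(\lambda-\bar\lambda)(\sigma'-\tau')$, whose $L^{2}$ norm is controlled by $|\lambda-\bar\lambda|$ times $\|h'\|_{\mathcal H}$ after one application of the Poincar\'e inequality.

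The bulk of the work lies in the flux components $F_{2,h}$ and $F_{3,h}$. Taking differences, the only surviving terms in $F_{2,h}$ are $-c_{n}\sigma'\nabla(\rho-\bar\rho)$ and $-c_{n}(\sigma-\bar\sigma)\nabla\rho'$ (with the obvious analogue for $F_{3,h}$). Bounding the $H^{-1}$ norm by the supremum over test functions $\phi$ with $\|\phi\|_{H^{1}_{0}}\le 1$, I apply H\"older with exponents $4,4,2$: the factor $\nabla\phi$ is kept in $L^{2}$, where the unit-ball constraint works, while the remaining two factors go into $L^{4}$. Here the $L^{4}$ bound for $\nabla(\rho-\bar\rho)$ and $\nabla\rho'$ crucially uses $\rho,\rho'\in H^{2}$ together with the Sobolev embedding $H^{1}\hookrightarrow L^{4}$ valid for $N\le 3$, while the $\sigma$ and $\sigma'$ factors are in $L^{4}$ directly from $H^{1}_{0}\hookrightarrow L^{4}$. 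Summing the three components and using $a+b\le\sqrt{2}\,(a^{2}+b^{2})^{1/2}$ converts the linear bound into the form (\ref{Lip}), with a constant depending only on $d_{n},c_{n},d_{p},c_{p}$ and the Sobolev/Poincar\'e constants of $\Omega$, but independent of the base points $(h,\lambda,\bar h,\bar\lambda)$ — so the estimate is genuinely global.

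The main (and essentially only) subtlety is choosing the H\"older exponents so that the test function $\phi$ is used only through its $H^{1}_{0}$ norm, while the strongest regularity available in $\mathcal H$ — the $H^{2}$ component — is deployed precisely where it is needed, namely on the gradient factors $\nabla(\rho-\bar\rho)$ and $\nabla\rho'$, which would otherwise sit only in $L^{2}$. The parallel terms $+c_{p}\tau'\nabla(\rho-\bar\rho)$ and $+c_{p}(\tau-\bar\tau)\nabla\rho'$ arising in $F_{3,h}$ are handled identically, and I expect no further difficulty beyond bookkeeping.
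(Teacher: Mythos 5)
Your proposal is correct and follows essentially the same route as the paper: the differences in $F_{2,h}$, $F_{3,h}$ reduce to the terms $\sigma'\nabla(\rho-\bar\rho)$ and $(\sigma-\bar\sigma)\nabla\rho'$ (and their $F_{3,h}$ analogues), estimated in $H^{-1}$ by duality with H\"older exponents $4,4,2$ and the embeddings $H^{2}\hookrightarrow W^{1,4}$, $H^{1}_{0}\hookrightarrow L^{4}$, while the $\lambda$-dependence enters only linearly through $F_{1,h}$; this is exactly the paper's argument, concluded in the same way with elementary $\ell^{2}$ inequalities to reach the form (\ref{Lip}).
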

\begin{proof}
It suffices to show that $F_{h}(\cdotp, \lambda)$ is Lipschitz continuous for each fixed 
$\lambda$ and that $F_{h}(h, \cdotp)$ Lipschitz continuous for each fixed $h$. 

We begin with the first case.
Because of the structure of the system (\ref{der1}), only differences in
the second and third components are relevant. These are quite similar in
their analysis, so that we will consider differences in $F_{2, h}$. 
The following estimates hold for $\|\phi\|_{H^{1}_{0}} \leq 1$.
They are required for $H^{-1}$ estimation.
\begin{eqnarray}
c_{n}\|\sigma^{\prime}\nabla (\rho_{2} - \rho_{1}) \cdotp \nabla \phi\|_{L^{1}} 
& \leq & c_{n}\|\sigma^{\prime}\|_{L^{4}} \| \rho_{2} -  \rho_{1}\|_{W^{1,4}} 
 \leq c_{n}C \|\sigma^{\prime}\|_{H^{1}_{0}} \|\rho_{2} - \rho_{1}\|_{H^{2}} 
\nonumber \\
c_{n}\|(\sigma_{2}-\sigma_{1})\nabla \rho^{\prime}\cdotp \nabla \phi\|_{L^{1}} 
& \leq & c_{n}\|\nabla \rho^{\prime}\|_{L^{4}}\|\sigma_{2}-\sigma_{1}\|_{L^{4}} 
 \leq c_{n}C \|\nabla \rho^{\prime}\|_{H^{1}_{0}}
\|\sigma_{2}-\sigma_{1}\|_{H^{1}_{0}} 
\nonumber 
\end{eqnarray}
In these inequalities, $C$ is a generic constant obtained from the product
of Sobolev embedding constants.
As mentioned, the third component analysis is exactly parallel. 
Since the duality norm for ${\mathcal G}$ involves the supremum over
test functions with ${\mathcal H}$ norm not exceeding one,  
we may obtain an upper bound for  
\begin{equation}
\|(F_{h}(h_{2}, \lambda) - 
F_{h}(h_{1}, \lambda)) h^{\prime} \|_{{\mathcal G}}
\end{equation}
from the estimates above. Indeed, each estimate is bounded above by a
constant times 
$ \|h_{2} - h_{1} \|_{{\mathcal H}} \|h^{\prime} \|_{{\mathcal H}}$.
The result that $F_{h}(\cdotp, \lambda)$ is Lipschitz continuous  
follows from an application of basic $\ell^{2}$ inequalities. 

The second case, with fixed $h$,  
is immediate.
This concludes the proof.
\end{proof}
\begin{proposition}
\label{Lipmu}
The partial derivative $F_{\lambda}(h, \lambda)$ is 
globally Lipschitz continuous. 
\end{proposition}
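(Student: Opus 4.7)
The plan is to exploit the very explicit formula for $F_{\lambda}$ supplied by Lemma \ref{Flam}: the second and third components vanish identically, and the first component is simply multiplication by $(n - p) = (\sigma - \tau) + (a_{n} - a_{p})$. In particular, $F_{\lambda}$ has no explicit $\lambda$-dependence at all, so the only work is to show Lipschitz continuity in the $h$ variable; once that is in hand, the full joint Lipschitz estimate in $(h,\lambda)$ follows immediately by monotonicity of the Euclidean norm in $\{\|h_2 - h_1\|_{\mathcal H}^2 + |\lambda_2 - \lambda_1|^2\}^{1/2}$.

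Concretely, I would fix $h_i = (\rho_i, \sigma_i, \tau_i) \in \mathcal H$ for $i=1,2$ and a scalar $\theta \in \mathbb R$, and compute
\begin{equation*}
\bigl(F_{\lambda}(h_{2}, \lambda_{2}) - F_{\lambda}(h_{1}, \lambda_{1})\bigr)(\theta) = \bigl(\theta\bigl[(\sigma_{2}-\sigma_{1}) - (\tau_{2}-\tau_{1})\bigr],\; 0,\; 0\bigr),
\end{equation*}
noting that the fixed boundary lifts $a_{n}, a_{p}$ cancel. The target space $\mathcal G$ uses the $L^{2}$ norm on the first slot, so I pull out $|\theta|$ and bound the remaining $L^{2}$ norm of the difference. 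Since $\sigma_{i}, \tau_{i} \in H_{0}^{1}(\Omega)$ and $\Omega$ has finite width, the Poincar\'{e} inequality gives a constant $C_{P}$ for which $\|\sigma_{2}-\sigma_{1}\|_{L^{2}} \leq C_{P}\|\sigma_{2}-\sigma_{1}\|_{H_{0}^{1}}$ and likewise for $\tau$.

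Combining these two pointwise bounds via the triangle inequality yields
\begin{equation*}
\bigl\|\bigl(F_{\lambda}(h_{2}, \lambda_{2}) - F_{\lambda}(h_{1}, \lambda_{1})\bigr)(\theta)\bigr\|_{\mathcal G} \leq |\theta|\, C_{P}\bigl(\|\sigma_{2}-\sigma_{1}\|_{H_{0}^{1}} + \|\tau_{2}-\tau_{1}\|_{H_{0}^{1}}\bigr) \leq C|\theta|\,\|h_{2}-h_{1}\|_{\mathcal H}.
\end{equation*}
Taking the supremum over $|\theta| \leq 1$ produces the desired operator-norm bound in $B[\mathbb R, \mathcal G]$, and using $\|h_{2}-h_{1}\|_{\mathcal H} \leq \{\|h_{2}-h_{1}\|_{\mathcal H}^{2} + |\lambda_{2}-\lambda_{1}|^{2}\}^{1/2}$ yields the precise form (\ref{Lip}) required by Definition \ref{hypotheses}.

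Honestly, there is no real obstacle here; the only thing to watch is the bookkeeping of the norm on $\mathcal H = (H^{2}\cap H_{0}^{1}) \times H_{0}^{1} \times H_{0}^{1}$, i.e.\ making sure one uses the $H_{0}^{1}$ norms of the $\sigma$ and $\tau$ components rather than, say, $H^{1}$ norms, and citing the width assumption $d < d_{0}$ when invoking the Poincar\'{e} inequality. Everything else is a direct consequence of the fact that $F_{\lambda}$ is affine linear in $h$ and constant in $\lambda$.
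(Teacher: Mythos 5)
Your proof is correct and follows essentially the same route as the paper: $F_{\lambda}$ is constant in $\lambda$ and affine in $h$, so Lipschitz continuity reduces to the explicit $L^{2}$ bound on $\theta[(\sigma_{2}-\sigma_{1})-(\tau_{2}-\tau_{1})]$, which you make precise via Poincar\'{e}. One small remark: the Poincar\'{e} inequality here only requires that $\Omega$ have finite width (or be bounded), not the smallness condition $d < d_{0}$, which is needed elsewhere for coercivity.
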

\begin{proof}
The partial derivative $F_{\lambda}(h, \cdotp)$  
is invariant for each
fixed $h$. It is sufficient then to consider $F_{\lambda}(\cdotp, \lambda)$
for each fixed $\lambda$. In this case, 
the linearity of the components of $h$ implies the result.
\end{proof}
\begin{remark}
The hypothesis of (\ref{Lip}) has now been verified. The constant $C$ is obtained directly from the current analysis. 
\end{remark}
\subsection{A bounded inverse for $F_{h}$}
Recall that $h_{0}$ denotes the solution of the system for $\lambda = 0$.
In the previous subsections, the action of $F(\cdotp, \lambda)$ has included 
all of
${\mathcal H}$. Here, we restrict the domain of $F(\cdotp, \lambda)$ to a ball  
$S= \{h \in {\mathcal H}: \|h\|_{\mathcal H} \leq R\}$, where 
$R = \|h_{0}\|_{{\mathcal H}} + r$, with $r$ to be specified. Observe  that
the general theory guarantees that the homotopy curve lies in an open ball
of radius $r$, centered at $h_{0}$. 
We note here that the restriction of $F$ to $S$ guarantees the logical
consistency of the choice of the domain width $d$,
so that it remains
bounded away from $0$. This enters decisively into the contraction
mapping theorem discussed below.
We now construct inverses for $F_{w}$ along the curve.

For each fixed $\lambda$, 
and each fixed $h$, we will show that there is a bounded linear
operator $H(h, \lambda)$ 
from ${\mathcal G}$ to ${\mathcal H}$ which is an
inverse for $F_{h} (h, \lambda)$. 
\begin{proposition}
There is a positive real number $d_{1} = d_{1}(R)$ such that, if
the width $d$ of $\Omega$ satisfies $d \leq  d_{1}$ and $d < d_{0}$, 
for each pair $(h, \lambda)$, the linear operator $F_{h}(h, \lambda)$ is
bijective. More precisely, for each $g \in {\mathcal G}$, there is a
unique element
$h^{\prime} \in {\mathcal H}$ such that 
$F_{h}(h, \lambda)h^{\prime} = g$.
In particular, 
$F_{h}(h, \lambda)$ is invertible. The norm $M$   
of the inverse may be selected so that $M = 2$. 
\end{proposition}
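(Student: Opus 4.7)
The plan is to invert $F_{h}(h,\lambda)$ by decoupling and contraction, exploiting the block structure of (\ref{der1}). Given $g=(g_{1},g_{2},g_{3})\in\mathcal{G}$ and a candidate pair $(\sigma',\tau')\in H^{1}_{0}\times H^{1}_{0}$, I first solve the Poisson problem $-\nabla^{2}\rho'=g_{1}-\lambda(\sigma'-\tau')$ on the convex domain $\Omega$ to obtain the unique $\rho'\in H^{2}\cap H^{1}_{0}$ via the convex-domain regularity already invoked at $\lambda=0$ (from \cite{VA}). With $\rho'$ fixed, the equations for $\tilde\sigma',\tilde\tau'$ are independent: $L_{n}\tilde\sigma'=g_{2}-\nabla\cdot(c_{n}n\nabla\rho')$ and $L_{p}\tilde\tau'=g_{3}+\nabla\cdot(c_{p}p\nabla\rho')$, where $L_{n}\sigma:=-\nabla\cdot(d_{n}\nabla\sigma-c_{n}\sigma\nabla u)$ and $L_{p}\tau:=-\nabla\cdot(d_{p}\nabla\tau+c_{p}\tau\nabla u)$, with $u,n,p$ the fixed coefficients read off from $h\in S$. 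This defines a map $T:(\sigma',\tau')\mapsto(\tilde\sigma',\tilde\tau')$ on $H^{1}_{0}\times H^{1}_{0}$; a fixed point of $T$, paired with its $\rho'$, furnishes the required $h'$, and uniqueness will follow from uniqueness of the fixed point.

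The central technical ingredient is the invertibility of $L_{n}$ (and $L_{p}$) with a uniform bound. Proposition \ref{surj} applies with the decomposition $L_{1}=-\nabla\cdot d_{n}\nabla$ and $L_{2}=\nabla\cdot c_{n}(\cdot)\nabla u$ exactly as in the $\lambda=0$ proof, and strong continuity of $L_{2}$ follows from Rellich--Kondrachov as before. The essential modification is coercivity: now $\nabla^{2}u=\nabla^{2}\rho+\nabla^{2}a_{u}\in L^{2}$ rather than $L^{\infty}$, so the $\lambda=0$ estimate does not apply. Integration by parts gives
\begin{equation*}
\Bigl|\tfrac{c_{n}}{2}\int_{\Omega}(\nabla^{2}u)\sigma^{2}\,dx\Bigr|\leq \tfrac{c_{n}}{2}\|\nabla^{2}u\|_{L^{2}}\|\sigma\|_{L^{4}}^{2},
\end{equation*}
and Gagliardo--Nirenberg combined with Poincar\'e on a narrow domain yields $\|\sigma\|_{L^{4}}^{2}\leq Cd^{\alpha}\|\nabla\sigma\|_{L^{2}}^{2}$ with $\alpha=1/2$ when $N=3$ and $\alpha=1$ when $N=2$. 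Since $\|u\|_{H^{2}}\leq R+\|a_{u}\|_{H^{2}}=:R'$ uniformly for $h\in S$, the coercivity estimate $\langle L_{n}\sigma,\sigma\rangle\geq(d_{n}/2)\|\sigma\|_{H^{1}_{0}}^{2}$ holds whenever $d\leq d_{1}(R)$, for a threshold $d_{1}$ depending only on $R'$, $d_{n}$, $c_{n}$ and embedding constants. Proposition \ref{surj} then delivers invertibility with $\|L_{n}^{-1}\|\leq 2/d_{n}$, and likewise for $L_{p}$.

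With $L_{n}^{-1},L_{p}^{-1}$ in hand, contractivity of $T$ is routine. The Poisson step gives $\|\rho'_{1}-\rho'_{2}\|_{H^{2}}\leq C(\|\sigma'_{1}-\sigma'_{2}\|_{L^{2}}+\|\tau'_{1}-\tau'_{2}\|_{L^{2}})\leq Cd(\|\sigma'_{1}-\sigma'_{2}\|_{H^{1}_{0}}+\|\tau'_{1}-\tau'_{2}\|_{H^{1}_{0}})$ by Poincar\'e, while the $L_{n}^{-1}$ step gives $\|\tilde\sigma'_{1}-\tilde\sigma'_{2}\|_{H^{1}_{0}}\leq(2c_{n}/d_{n})\|n\|_{L^{6}}\|\nabla(\rho'_{1}-\rho'_{2})\|_{L^{3}}\leq CR'\|\rho'_{1}-\rho'_{2}\|_{H^{2}}$ using H\"older and $H^{1}\hookrightarrow L^{6}$ (valid for $N\leq 3$). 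Composing, the contraction factor scales like $d$ and can be driven below $1/2$ by a further reduction of $d_{1}(R)$; Banach's fixed-point theorem yields the unique $h'$, and collating the component bounds (Poisson, $L_{n}^{-1}$, $L_{p}^{-1}$) produces an inverse-norm estimate which, upon final tuning of $d_{1}$, may be taken to be $M=2$.

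The main obstacle is clearly the coercivity step. The $L^{\infty}$ bound on $\nabla^{2}u$ available at $\lambda=0$ is lost as soon as we linearize at a general $h\in S$, and must be replaced by an $L^{2}$-bound paid for by a positive power of the narrowness $d$ through a Gagliardo--Nirenberg interpolation. This is precisely why the ball $S$ must be fixed first (so that $R'$ is finite and uniform along the curve) and why the threshold $d_{1}$ must be allowed to depend on $R$; it is exactly the trade-off alluded to by the preceding paragraph's remark on the ``logical consistency'' of keeping $d$ bounded away from $0$.
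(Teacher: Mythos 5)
Your argument is correct, but it takes a genuinely different route from the paper. The paper preconditions the entire linearized system (\ref{der1}) by the componentwise inverse Laplacian and runs a single Banach fixed-point iteration in ${\mathcal H}$: every lower-order term, including the drift terms $\sigma^{\prime}\nabla u$ and $\tau^{\prime}\nabla u$ inside the continuity blocks, is treated perturbatively, and the key estimates are the duality representation (\ref{dualequiv}), the narrow-domain Poincar\'{e} inequality, and the convex-domain regularity of \cite{VA}, yielding $\|{\mathcal T}h^{\prime}\|_{\mathcal H}\leq dC(R)\|h^{\prime}\|_{\mathcal H}$ and hence $M=2$ directly from a contraction constant $\leq 1/2$. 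You instead perform a block elimination: you invert the full convection--diffusion operators $L_{n},L_{p}$ exactly, re-using Proposition \ref{surj} (which the paper invokes only at $\lambda=0$), and contract only over the Poisson--continuity coupling. The genuinely new ingredient you supply, correctly, is the replacement of the $\lambda=0$ coercivity estimate (\ref{coe}): at a general $h\in S$ one has only $\nabla^{2}u\in L^{2}$, and your Gagliardo--Nirenberg/Poincar\'{e} bound $\|\sigma\|_{L^{4}}^{2}\leq Cd^{1/2}\|\nabla\sigma\|_{L^{2}}^{2}$ (for $N=3$) restores coercivity for $d\leq d_{1}(R)$; your contraction estimate, with the factor $d$ entering through the Poisson step, is also sound. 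What each approach buys: yours isolates exactly where narrowness is needed (coercivity of the diagonal blocks and smallness of the off-diagonal coupling) and leans on monotone-operator theory; the paper's single-contraction scheme is more uniform and produces the quantitative inverse bound with no separate surjectivity argument. One caveat on the final claim: collating your component bounds gives an inverse norm of order $\max(1,2/d_{n},2/d_{p})$ as $d\to 0$, so no tuning of $d_{1}$ alone yields literally $M=2$ when $d_{n}$ or $d_{p}<1$; to match the stated constant you need the same normalization the paper uses, namely dividing the second and third components of $g$ by $d_{n},d_{p}$ (equivalently, an equivalent weighted norm on ${\mathcal G}$), after which your scheme delivers $M=2$ as well.
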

\begin{proof}
There are two stages to the proof.
\begin{description}
\item
(a)
Let $g \in {\mathcal G}$ be given, and consider the system 
$F_{h}(h, \lambda) h^{\prime} = g$. 
We show that this system has a unique solution $h^{\prime}$ under
assumptions on the width $d$.
\item
(b)
There is a uniform  bound  for 
the norms of the inverses of (a). This is achieved by an appropriate
uniform selection of $d(R)$.
\end{description}
We consider these in turn.

(a)
Although the system is linear, we employ the 
contraction mapping theorem
for an appropriately defined operator ${\mathcal T}$. 
For the remainder of the proof, we use the equivalent norm on 
${\mathcal H}$ given by the Laplacian norm for the first component and,
as previously mentioned,  the 
derivative part of the $H^{1}_{0}$ norm for the remaining components.
Also, for convenience of notation, we write
$\Delta = \nabla^{2}$.
In order to reformulate the system for an application of the contraction  
mapping theorem, we define $-\Delta^{-1}:{\mathcal G} \mapsto {\mathcal H}$
by the componentwise definition:
\begin{equation}
-\Delta^{-1} g_{i} = f_{i},  
\end{equation}
where 
\begin{equation}
(\nabla f_{i}, \nabla \phi)_{L^{2}} = g_{i}(\phi), i=1,2,3.
\end{equation}
Results of \cite{VA} for convex domains 
allow us to conclude that the Laplacian defines an isomorphism
between
$H^{2} \cap H^{1}_{0}$ and $L^{2}$. 

The surjectivity will follow from a fixed point argument applied to an
equivalent system, given by
\begin{equation}
h^{\prime} = {\mathcal T}h^{\prime} - \Delta^{-1}{\tilde g}.
\end{equation} 
Here, ${\tilde g}$ is obtained from $g$ by by dividing the second and
third components by $d_{n}, d_{p}$, respectively. The same operations are
applied to the components of $F_{h}$ in the formation of ${\mathcal T}$.

Since ${\mathcal T}$ is a strict contraction if and only if ${\mathcal T}
- \Delta^{-1} {\tilde g} := {\mathcal S}$ is, we consider ${\mathcal T}$. 
For reference, we write the
form explicitly.
\begin{eqnarray}
&{\mathcal T}_{1}(\rho^{\prime}, \sigma^{\prime}\, \tau^{\prime})  = 
\lambda \Delta^{-1}(\sigma^{\prime} - \tau^{\prime}),
\nonumber \\
&{\mathcal T}_{2}(\rho^{\prime}, \sigma^{\prime}, \tau^{\prime}) = 
  \frac{c_{n}}{d_{n}}\Delta^{-1} \nabla \cdotp (
\sigma^{\prime} \nabla u
+ n \nabla \rho^{\prime})
 \nonumber \\
&{\mathcal T}_{3}(\rho^{\prime}, \sigma^{\prime}, \tau^{\prime}) = 
 - \frac{c_{p}}{d_{p}}\Delta^{-1} \nabla \cdotp (
\tau^{\prime} \nabla u
+ p \nabla \rho^{\prime})
\end{eqnarray}
The proof will show that ${\mathcal T}$ maps ${\mathcal H}$ into itself.
This is inherited by ${\mathcal S}$.
We verify the strict contractive property. Since ${\mathcal T}$ is linear,
it suffices to estimate $\|{\mathcal T} h^{\prime}\|_{\mathcal H}$.
A fundamental tool in this estimation is the use of the version of the
Poincar\'{e} inequality in \cite[Th.\ 12.17]{Leoni}. 
This inequality permits $L^{p}$ estimation in terms of a 
$W^{1,p}_{0}$ estimation with a constant proportional to the finite width of
$\Omega$. 
Recall that this property was used effectively to derive the solution
$h_{0}$. 
Now, for sufficiently small width, the contraction constant is less
than $1$. We present the argument as follows, with emphasis on the case $N
= 3$, the most delicate case. The case $N = 2$ is implied by the
arguments. 
\begin{itemize}
\item
$H^{2} \cap H^{1}_{0}$ estimation of ${\mathcal T}_{1}$
\end{itemize}
\begin{equation}
\|\Delta^{-1}\sigma^{\prime}\|_{H^{2}} \leq \|\Delta^{-1}\|  
\|\sigma^{\prime}\|_{L^{2}}
\leq (d/\sqrt{2})\|\Delta^{-1} \| \|\nabla \sigma^{\prime}\|_{L^{2}}.  
\end{equation}
A similar result holds for the term involving $\tau^{\prime}$.
\begin{itemize}
\item
$H^{1}_{0}$ estimation of ${\mathcal T}_{2}$
\end{itemize}
When the divergence operator is applied, each of the four terms is of the
form $-\Delta^{-1} \chi$, where it will be shown that $\chi \in L^{3/2}$.
In order to estimate the $H^{1}_{0}$ norm of $-\Delta^{-1} \chi$, we
observe that, by definition, this term is numerically equal to
the $H^{-1}$ norm of $\chi$. 
A representation of this norm on $H^{-1}$  
will be chosen, allowing for a direct application of the
Poincar\'{e} inequality. This form was used in \cite{JR}, in the study of
the Stefan problem and is used for $L^{p}$ functions which define
continuous linear functionals, provided $-\Delta^{-1} \chi \in H^{1}_{0}$,
as is the case for $\chi \in L^{3/2}$. 
In particular, for $\chi$ described above, we have
\begin{equation}
\label{dualequiv}
\|\chi\|_{H^{-1}} =\sqrt{\int_{\Omega} {\mathcal R}\chi \; \chi \; dx}.
\end{equation} 
Here, we have written ${\mathcal R}$ for the map $-\Delta^{-1}$.

For $\chi$ in (\ref{dualequiv}), we have the following inequalities.
\begin{eqnarray}
\int_{\Omega} {\mathcal R}\chi \; \chi \; dx &\leq& \|{\mathcal R} \;\chi
\|_{L^{3}} \|\chi \|_{L^{3/2}}\\
&\leq& (d/3^{1/3}) \|\nabla {\mathcal R} \chi \|_{L^{3}}
\|\chi\|_{L^{3/2}}\\
&\leq& C (d/3^{1/3}) \|{\mathcal R} \chi \|{W^{2, 3/2}}
\|\chi\|_{L^{3/2}}\\
&\leq& C (d/3^{1/3})\|{\mathcal R}\|  
 \|\chi\|_{L^{3/2}}^{2}.
\end{eqnarray}
The first inequality follows from H\"{o}lder's inequality; the second from
the Poincar\'{e} inequality\cite{Leoni}; the third from Sobolev's
embedding theorem \cite{Adams}; and the forth from \cite{VA}. 

In this estimation, we will choose $\chi = \chi_{1} \chi_{2}$, where
$\chi_{1} \in L^{2}$ and $\chi_{2} \in H^{1}$. 
Typically, these functions are vector-valued.
H\"{o}lder's inequality
implies that $\chi \in L^{3/2}$.
Indeed, when the inequality is applied with $p = 4/3$ and $q=4$ to 
$\|\chi_{1} \chi_{2} \|_{L^{3/2}}$, one obtains the respective product of
the $L^{2}$ and $L^{6}$ norms; Sobolev's inequality gives the desired
result:
\begin{equation}
\|\chi\|_{H^{-1}}^{2} \leq 
C^{\prime} d \|\chi_{1} \|^{2}_{L^{2}} \|\chi_{2}\|^{2}_{H^{1}_{0}}.  
\end{equation}
Here, $C^{\prime}$ depends only on operator and embedding constants.
These general remarks will now be applied to the terms above obtained by the
application of the divergence operator as follows. Consider ${\mathcal
T_{2}}$. 
\begin{equation}
  \frac{c_{n}}{d_{n}} (
\nabla \sigma^{\prime}\cdotp  \nabla u
 + \sigma^{\prime} \nabla^{2} u
+ \nabla n \cdotp \nabla \rho^{\prime}
+ n \nabla^{2} \rho^{\prime}).
\end{equation}
Each of these four terms is an example of the generic function $\chi =
\chi_{1} \chi_{2}$ discussed previously. In each case, the terms involving
$u$ and $n$  and their derivatives have norm bounds depending on a 
constant $C(R)$. A similar
analysis holds for the third component of
${\mathcal T}$. In summary, by standard  
inequalities, we obtain an inequality of the form:
\begin{equation}
\|{\mathcal T} h^{\prime}\|_{\mathcal H} \leq d C(R)
\|h^{\prime}\|_{\mathcal H}.
\end{equation}

We thus obtain a contraction if $d$ is sufficiently small.
In particular, we may choose $d \leq d_{2}$ so that the
contraction constant does not exceed $1/2$. Note that this choice of $d$
depends only upon $R$.
Thus, ${\mathcal S}$ has a unique fixed point; the existence of
an inverse follows.  

(b) To obtain a uniform estimate for the inverse norm, we begin with the
fixed point relation,
\begin{equation}
h^{\prime} = {\mathcal T} h^{\prime} + \Delta^{-1} {\tilde g},
\end{equation}
and we estimate the ${\mathcal H}$ norm.
We obtain,
\begin{equation}
\|h^{\prime}\|_{\mathcal H} \leq  \|{\mathcal T} h^{\prime}\|_{\mathcal H} 
+ \|{\tilde g}\|_{\mathcal G}.
\end{equation}
Since we have selected $d$ to ensure an upper bound of $1/2$ for the
contraction constant, we conclude that $2$ serves as an upper bound for
the inverse mapping. This is the assertion $M = 2$ in the statement of the
proposition.
\end{proof}
\subsection{The existence theorem}
We begin with an essential lemma.
\begin{lemma}
A uniform  estimate for $F_{\lambda}$ is given as follows.
\begin{equation}
\|F_{\lambda}\| \leq \|a_{n}\|_{L^{2}} 
+\|a_{p}\|_{L^{2}} 
+ (2d/\sqrt{2}) R. 
\end{equation}
Here, $d$ is the width of $\Omega$ and $R = r + \|h_{0}\|_{{\mathcal H}}$.
Also, $h_{0}$ is a solution 
of the system for $\lambda = 0$ and $r$, to be specified below, is
the radius of a ball centered at $h_{0}$.
\end{lemma}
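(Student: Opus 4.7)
The plan is to read off $F_{\lambda}$ from Lemma \ref{Flam} and observe that only its first component is nontrivial, so the operator norm of $F_{\lambda}(h,\lambda): \mathbb{R} \to \mathcal{G}$ reduces to the $L^{2}$ norm of $n-p$. Concretely, since $F_{2,\lambda} = F_{3,\lambda} = 0$ and the first slot of $\mathcal{G}$ carries the $L^{2}$ norm, one has
\begin{equation*}
\|F_{\lambda}(h,\lambda)\|_{B[\mathbb{R},\mathcal{G}]}
=\sup_{|\theta|\leq 1}\|\theta(n-p)\|_{L^{2}}
=\|n-p\|_{L^{2}}.
\end{equation*}

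Next I would unfold $n$ and $p$ in terms of the distinguished lift: $n = \sigma + a_{n}$ and $p = \tau + a_{p}$. The triangle inequality then gives
\begin{equation*}
\|n-p\|_{L^{2}} \leq \|\sigma\|_{L^{2}} + \|\tau\|_{L^{2}} + \|a_{n}\|_{L^{2}} + \|a_{p}\|_{L^{2}}.
\end{equation*}
The $a_{n},a_{p}$ terms are already in the desired form, so everything reduces to controlling $\|\sigma\|_{L^{2}}$ and $\|\tau\|_{L^{2}}$.

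The key step is the Poincar\'e inequality for domains of finite width $d$ (\cite[Th.\ 12.17]{Leoni}, as already invoked in the $\lambda=0$ analysis), which for $f\in H^{1}_{0}(\Omega)$ gives $\|f\|_{L^{2}}\leq (d/\sqrt{2})\|\nabla f\|_{L^{2}}$. Applied componentwise this yields
\begin{equation*}
\|\sigma\|_{L^{2}} + \|\tau\|_{L^{2}}
\leq (d/\sqrt{2})\bigl(\|\sigma\|_{H^{1}_{0}} + \|\tau\|_{H^{1}_{0}}\bigr).
\end{equation*}
Since $h = (\rho,\sigma,\tau)$ lies in the restricted ball $S$ of radius $R = r + \|h_{0}\|_{\mathcal{H}}$, each of $\|\sigma\|_{H^{1}_{0}}$ and $\|\tau\|_{H^{1}_{0}}$ is bounded by $R$, so their sum is at most $2R$. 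Combining yields the stated bound $\|a_{n}\|_{L^{2}} + \|a_{p}\|_{L^{2}} + (2d/\sqrt{2})R$.

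There is no real obstacle here; the lemma is essentially bookkeeping. The only subtlety is that the estimate has to be \emph{uniform} in $(h,\lambda)$ along the prospective homotopy curve, which is precisely why the restriction of the domain of $F$ to the ball $S$ in the preceding subsection is needed: without this a priori cap on $\|h\|_{\mathcal{H}}$, the $L^{2}$ norm of $n-p$ could not be controlled by a single constant, and the decoupling of $\|F_{\mu}\|$ from $r$ required by inequality (\ref{bigr}) in Theorem \ref{exisgen} would fail.
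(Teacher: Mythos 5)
Your proposal is correct and follows essentially the same route as the paper's (much terser) proof: reduce $\|F_{\lambda}\|$ to the $L^{2}$ norm of $n-p$, split off $a_{n},a_{p}$, and apply the width-dependent Poincar\'{e} inequality to $\sigma$ and $\tau$, each bounded by $R$ on the restricted ball $S$. The paper compresses exactly these steps into three sentences, so your write-up is simply a fuller version of the same argument.
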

\begin{proof}
We proceed 
from the explicit representation of $\|F_{\lambda}\|$, and calculate
the operator norm. 
It is bounded above by the $L^{2}$ norm. 
The factor $2d/\sqrt{2}$ follows from the Poincar\'{e}
inequality applied to the elements $\sigma, \tau$.
\end{proof}
\begin{remark}
One sees that the choices, for $0 < \alpha_{0} \leq 1$, 
\begin{eqnarray*}
d &=& \frac{\alpha_{0}}{2\sqrt{2} \max(C, 2)}, \\ 
r &=& \frac{1}{1-\alpha_{0}/2}\max(C, 2)(1 
+\|a_{n}\|_{L^{2}} 
+\|a_{p}\|_{L^{2}})
+ \frac{\alpha_{0}}{2 - \alpha_{0}}\|h_{0}\|_{{\mathcal H}},  
\end{eqnarray*}
lead to the satisfaction of the requirement for the existence of the
homotopy curve, with the proper choice of 
$\alpha_{0}$. This is made specific as follows.
\end{remark}
The following theorem is now a direct consequence of Theorem
\ref{exisgen}.
\begin{theorem}
Consider the drift-diffusion system $F(u, \lambda) = 0$, where $F$ is
defined in Definition \ref{defF}. 
If the width $d$ of $\Omega$
satisfies $d < d_{0}$, where $d_{0}$ is defined in  
(\ref{d0}), then 
there is a solution to the system for $\lambda = 0$. If $r$
is defined as in the remark above, and $d$ also satisfies $d \leq d_{1}, 
 d \leq d_{2}$, 
then there exists a homotopy solution
curve within the open ball of radius $r$, centered at $h_{0}$. 
\end{theorem}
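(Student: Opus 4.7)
The plan is to verify that every hypothesis of Theorem \ref{exisgen} has been secured by the preceding subsections, and then to invoke that theorem with $X = {\mathcal H}$, $Z = {\mathcal G}$, $[\lambda_0,\lambda_1] = [0,1]$, and center point $u_0 = h_0$. The ingredients are already in place: Propositions \ref{Lipw} and \ref{Lipmu} establish the global Lipschitz continuity hypothesis (\ref{Lip}) for $F_h$ and $F_\lambda$, with a constant $C$ arising from the embedding and duality estimates there; the bounded-inverse proposition produces an exact (hence approximate) right inverse $H(h,\lambda)$ satisfying (\ref{Inv}) and (\ref{BInv}) with $M = 2$, provided $d \leq d_1$; and the $\lambda = 0$ proposition, valid for $d < d_0$, supplies the initial condition $F(h_0, 0) = 0$.

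The remaining, and only genuinely quantitative, step is to verify the radius condition (\ref{bigr}). Since $\lambda_1 - \lambda_0 = 1$ and $M = 2$, this requires
\begin{equation*}
r \;\geq\; \max(C, 2)\Bigl[\,1 + \sup_{(h,\lambda)}\|F_\lambda(h,\lambda)\|\,\Bigr].
\end{equation*}
I would substitute into this the estimate from the preceding lemma, namely $\|F_\lambda\| \leq \|a_n\|_{L^2} + \|a_p\|_{L^2} + (2d/\sqrt{2}) R$ with $R = r + \|h_0\|_{\mathcal H}$. This produces an inequality that is \emph{implicit in $r$}, because the bound on $\|F_\lambda\|$ itself depends on $R$ and therefore on $r$.

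The main obstacle is precisely this self-referential coupling: the radius $r$ must be large enough to absorb a term proportional to $r$ itself, and decoupling is only possible because the coefficient of $r$ on the right-hand side is $(2d/\sqrt{2})\max(C,2)$, which we can force to be strictly less than $1$ by shrinking $d$. Choosing $d = \alpha_0/(2\sqrt{2}\max(C,2))$ for $\alpha_0 \in (0,1]$ makes this coefficient equal to $\alpha_0/2 < 1$. Rearranging the inequality then yields
\begin{equation*}
(1 - \alpha_0/2)\, r \;\geq\; \max(C,2)\bigl[1 + \|a_n\|_{L^2} + \|a_p\|_{L^2}\bigr] + (\alpha_0/2)\|h_0\|_{\mathcal H},
\end{equation*}
which is exactly satisfied, with equality, by the value of $r$ prescribed in the preceding remark.

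Finally I would record that the stipulated width $d$ is admissible for all three previous propositions: the hypothesis $d < d_0$ gives the $\lambda = 0$ solution and $d \leq d_1$, $d \leq d_2$ give the contraction-based bounded inverse on the ball of radius $R$ about $h_0$. With all hypotheses of Theorem \ref{exisgen} verified, its conclusion yields a curve $\lambda \mapsto h_\lambda$ with $F(h_\lambda, \lambda) = 0$ lying in the open ball of radius $r$ about $h_0$, completing the proof. No new technical estimate is needed; the proof is purely bookkeeping, with the only delicate point being the algebraic decoupling just described.
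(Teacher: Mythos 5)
Your proposal is correct and follows exactly the route the paper intends: the paper gives no separate argument beyond declaring the theorem a direct consequence of Theorem \ref{exisgen}, with the preceding remark supplying precisely the choices of $d$ and $r$ whose decoupling algebra you reproduce (and your rearrangement indeed recovers the paper's formula for $r$ with the coefficient $\alpha_0/(2-\alpha_0)$). Nothing in your verification differs in substance from the paper's implicit proof.
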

\subsection{Nonnegativity for ion densities}
Thus far, we have studied the system independently of the sign properties
associated with the ion densities. In the process, we have not made
assumptions on the signs of $a_{n}, a_{p}$.
We now assume their nonnegative boundary traces. Note that charge voids
are not eliminated, so that a situation can arise which is excluded when
Boltzmann statistics are assumed. 
\begin{theorem}
Suppose a homotopy solution curve $h_{\lambda} = 
(\rho_{\lambda}, \sigma_{\lambda}, \tau_{\lambda})$, 
exists as described in Definition \ref{defF}.
We write 
\begin{equation*}
(u_{\lambda}, n_{\lambda}, p_{\lambda}) = (a_{u}, a_{n}, a_{p}) +
h_{\lambda}
\end{equation*}
for the basic variables. 
There is a number $D_{0}$ such that, if the width $d$ satisfies $d <
D_{0}$, and 
if $a_{n} \geq 0, a_{p} \geq 0$ on $\partial \Omega$,
then $n_{\lambda} \geq 0, p_{\lambda} \geq 0$ 
on $\Omega$ 
for each fixed $\lambda$.
\end{theorem}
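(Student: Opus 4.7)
The plan is an energy argument on the negative parts of $n_\lambda$ and $p_\lambda$, closed by the narrow-width Poincar\'{e} inequality. I introduce the truncations $v_\lambda := \max(-n_\lambda, 0)$ and $w_\lambda := \max(-p_\lambda, 0)$. Because $\Gamma n_\lambda = \Gamma a_n \geq 0$ and $\Gamma p_\lambda = \Gamma a_p \geq 0$, both $v_\lambda$ and $w_\lambda$ lie in $H^1_0(\Omega)$, with $\nabla v_\lambda = -\chi_{\{n_\lambda < 0\}} \nabla n_\lambda$ and the analogous identity for $w_\lambda$; the goal is to show that each vanishes identically.

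First, I would test the weak equation $F_2(h_\lambda, \lambda) = 0$ against $v_\lambda$, and combine $\nabla v_\lambda = -\chi_{\{n_\lambda<0\}}\nabla n_\lambda$ with $n_\lambda = -v_\lambda$ on $\{n_\lambda<0\}$ to obtain
\begin{equation*}
d_n \|\nabla v_\lambda\|_{L^2}^2 = \frac{c_n}{2} \int_\Omega \nabla u_\lambda \cdot \nabla v_\lambda^2 \, dx.
\end{equation*}
Since $v_\lambda^2 \in H^1_0$ for $N \leq 3$ (via $H^1 \hookrightarrow L^4$), integration by parts together with the Poisson equation $-\nabla^2 u_\lambda = \lambda(p_\lambda - n_\lambda) + D$ converts this into
\begin{equation*}
d_n \|\nabla v_\lambda\|^2 = \frac{c_n}{2}\int_\Omega \bigl(\lambda(p_\lambda - n_\lambda) + D\bigr)\, v_\lambda^2 \, dx.
\end{equation*}
A parallel identity, with bracket $\lambda(n_\lambda - p_\lambda) - D$, arises from testing $F_3 = 0$ against $w_\lambda$.

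Next I would estimate each piece of the right-hand side. On $\{v_\lambda > 0\}$, $-n_\lambda = v_\lambda$, so the $n_\lambda$ term becomes $\int v_\lambda^3$. Writing $p_\lambda = p_\lambda^+ - p_\lambda^-$, the contribution $-\int p_\lambda^-\, v_\lambda^2$ has favourable sign and may be discarded, leaving $\int p_\lambda^+\, v_\lambda^2$, to which I would apply H\"{o}lder with exponents $(3, 3/2)$, the interpolation $\|f\|_{L^3}^2 \leq \|f\|_{L^2}\|f\|_{L^6}$, the narrow-width Poincar\'{e} estimate $\|f\|_{L^2} \leq (d/\sqrt{2})\|\nabla f\|_{L^2}$, and the Sobolev embedding $H^1_0 \hookrightarrow L^6$. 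Combined with the uniform ball bound $\|p_\lambda\|_{H^1} \leq R + \|a_p\|_{H^1}$ inherited from $h_\lambda \in B_R$, this yields $\int p_\lambda^+ v_\lambda^2 \leq C(R)\, d\, \|\nabla v_\lambda\|^2$ and $\int v_\lambda^3 \leq C\, d^{3/2}\, \|\nabla v_\lambda\|^3$, while the $D$ contribution is controlled by $\tfrac{1}{2}\|D\|_{L^\infty} d^2 \|\nabla v_\lambda\|^2$ through Poincar\'{e} alone. The cubic term is linearised using the same ball bound $\|\nabla v_\lambda\| \leq \|\nabla n_\lambda\| \leq R + \|a_n\|_{H^1}$.

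Collecting, the identity reduces to $\bigl[d_n - \Phi(R, d)\bigr]\|\nabla v_\lambda\|^2 \leq 0$ with $\Phi(R, d) \to 0$ as $d \to 0$. Choosing $D_0$ as the width at which $\Phi(R, D_0) = d_n$ forces $\nabla v_\lambda \equiv 0$, whence $v_\lambda \equiv 0$ and $n_\lambda \geq 0$. The symmetric argument on $w_\lambda$ produces $p_\lambda \geq 0$, and the final $D_0$ is the minimum of the two thresholds. The principal obstacle is the two-carrier coupling through the Poisson source $\lambda(p - n)$: the identity for $v_\lambda$ genuinely involves $p_\lambda$, not only its positive part, so one cannot treat positivity of $n_\lambda$ in isolation. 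Discarding the favourably signed contribution $-p_\lambda^- v_\lambda^2$ and invoking only the uniform $H^1$ ball-bound for $p_\lambda$ is the device that decouples the argument, at the cost of letting $D_0$ depend on $R$.
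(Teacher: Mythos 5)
Your proposal is correct and follows essentially the same route as the paper: test the weak equation with the negative part, integrate by parts so that $\nabla u_\lambda\cdot\nabla (v_\lambda^2)$ becomes a zero-order term driven by the Poisson equation, and absorb it using the narrow-width Poincar\'{e} inequality, Sobolev embedding, and the uniform ball bound $R$ along the curve. The only (cosmetic) difference is that you substitute $\lambda(p_\lambda-n_\lambda)+D$ and estimate the pieces $p_\lambda^{+}v_\lambda^2$, $v_\lambda^3$, $Dv_\lambda^2$ separately, whereas the paper bounds $\|\nabla^2 u_\lambda\|_{L^4}$ wholesale from the system by $C(D,R)$ and proceeds with a single H\"{o}lder estimate.
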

\begin{proof}
We observe that $\|h_{\lambda}\|_{{\mathcal H}} \leq R, \forall \lambda$.
Consider the system for a fixed $\lambda, 0\leq \lambda \leq 1$.
For clarity, we suppress the subscript  $\lambda$
for the variables. We consider the
second equation of the system, and we employ $n^{-}$ as a test function,
where $n^{-}$ is the negative part of $n$. Here, $n^{-}$ satisfies
$n = n^{+} + n^{-}$. The assumption that $a_{n} \geq 0$ on 
$\partial \omega$
implies that the boundary trace of $n^{-}$ is zero, so that 
$n^{-} \in H^{1}_{0}$. 
We now use the second equation to show that $n^{-}$ is
the zero function. 

We have:
\begin{equation}
0 = d_{n} \int_{\Omega} \nabla n \cdotp \nabla n^{-} \; dx 
- c_{n}\int_{\Omega} n \nabla u  \cdotp \nabla n^{-} \; dx.  
\end{equation}
It follows that 
\begin{eqnarray*}
0 &=& 
d_{n} \int_{\Omega} |\nabla n^{-}|^{2} \; dx 
- c_{n}/2 \int_{\Omega}  \nabla u  \cdotp \nabla (n^{-})^{2} \; dx  
\\
&=& 
d_{n} \int_{\Omega} |\nabla n^{-}|^{2} \; dx 
+ c_{n}/2 \int_{\Omega}  \nabla^{2} u  (n^{-})^{2} \; dx \\
&\geq& 
d_{n} \|n^{-}\|^{2}_{H^{1}_{0}} 
- c_{n}/2 \|\nabla^{2} u\|_{L^{4}}\; \|n^{-}\|_{L^{4}}\;
\|n^{-}\|_{L^{2}} \\
&\geq&  
d_{n} \|n^{-}\|^{2}_{H^{1}_{0}} - c_{n}/2 \;d \; C(D, R) \|\nabla
n^{-}\|^{2}_{L^{2}}. 
\end{eqnarray*}
The constant $C(D, R)$ arises from the estimation of the $L^{4}$ norm of
$\nabla^{2} u$ 
as determined from (\ref{sys9}), and from Sobolev embedding constants.
It is now immediate that, if $d$ is sufficiently small, then 
\begin{equation}
\|n^{-}\|_{H^{1}_{0}} = 0,
\end{equation}
which implies that $n^{-} = 0$.
A similar analysis yields the result that $p^{-} = 0$ if $d$ is
sufficiently small.
The number $D_{0}$ in the statement of the theorem is chosen as the
largest number such that the final expression  above, as well as the
corresponding expression  for $p^{-}$, is nonnegative.
\end{proof}
\subsection{Final remarks}
We have analyzed a basic scientific model of electrodiffusion and drift in
an alternative way, through the basic variables. Although we avoid the
Einstein relations and Boltzmann statistics, our analysis is restricted to
narrow convex domains. We establish the existence of a homotopy solution
curve, starting from the uniquely defined center of a specifically located 
sphere in
function space. 
The result then allows the application of 
\cite[Th.\ 3.1]{Newton}, which describes a predictor/corrector algorithm,
terminating at the starting iterate of a convergent Newton sequence.
The conditions of the general theory, verified here for the
drift-diffusion model, allow the implementation of the algorithm.
Although the results are theoretical, they provide a basic strategy for
the solution of the system.

As a concluding observation, we note that the arguments are not sign
sensitive, so that the electrostatic potential can be replaced by
potentials which induce drift. 


\begin{thebibliography}{10}
\bibitem{Lundstrom}
M.~Lundstrom, {\em Fundamentals of Carrier Transport}. Addison-Wesley,
Reading MA, 1990.

\bibitem{Keener}
J.~Keener and J.~Sneyd, {\em Mathematical Physiology}. Springer-Verlag, New
York, 1998.

\bibitem{Rubin}
I.~Rubinstein, {\em Electrodiffusion of Ions}. SIAM, Philadelphia, 1990.

\bibitem{semicon}
J.W.~Jerome,
Consistency in semiconductor modeling:
existence/stability analysis for the stationary Van Roosbroeck system. 
SIAM J.\ Appl.\ Math.
{\bf 45} (1985), 565-590.

\bibitem{KerJer}
J.W.~Jerome and T.~Kerkhoven, A finite element approximation theory for
the drift-diffusion semiconductor model. SIAM J.\ Numer.\ Anal. {\bf 28}
(1991), 403-422.

\bibitem{Newton}
J.W.~Jerome, Approximate Newton methods and homotopy for stationary operator equations. 
Constr.\ App. {\bf 1} (1985), 271-285.

\bibitem{Mock}
M.~Mock, On equations describing steady-state carrier distributions in a
semiconductor device. Comm.\ Pure Appl.\ Math.\ {\bf 25}  
(1972), 781-792.

\bibitem{Seidman} 
T.~Seidman, Steady state solutions of diffusion reaction systems with
electrostatic convection. Nonlinear Anal.\ {\bf 4} (1980), 623-637. 

\bibitem{BJR}
R.E.~Bank, J.W.~Jerome, and D.J.~Rose, Analytical and numerical aspects of
semiconductor device modeling. In, 
R.\ Glowinski and J.\ Lions (eds.) {\em Proc.\ Fifth International
Conference on Computing Methods in Science and Engineering.} North
Holland, Amsterdam, 1982, pp.\ 593-597.

\bibitem{Mark}
P.A.~Markowich,
{\em The Stationary Semiconductor Device Equations}. Springer, Vienna and
New York, 1986.

\bibitem{Jerbook}
J.W.~Jerome, {\em Analysis of Charge Transport: A Mathematical
Study of Semiconductor Devices}. Springer, Berlin, 1996.

\bibitem{PJ}
J.~Park and J.W.~Jerome,
Qualitative properties of solutions of steady-state Poisson-Nernst-Planck
systems: Mathematical study, SIAM J.\  
Appl.\ Math.\ {\bf 57} (1997), 609--630. 

\bibitem{BCEJ}
V.~Barcilon, D.P.~Chen, R.S.~Eisenberg, and J.W.~Jerome,
Qualitative properties of solutions of steady-state Poisson-Nernst-Planck
systems: Perturbation and simulation study, 
SIAM J.\  
Appl.\ Math.\ {\bf 57} (1997), 631--648. 


\bibitem{PNPNS}
J.W.~Jerome, The steady boundary-value problem for charged incompressible 
fluids: PNP Navier-Stokes systems. 
Nonlinear Anal.\ {\bf 74} (2011), 7486-7498.
 
\bibitem{ZeidlerII}
E.~Zeidler, {\it Nonlinear Functional Analysis and its Applications} 
II/B. Springer, 1990.  

\bibitem{Leoni}
G.~Leoni, {\em A First Course in Sobolev Spaces}. Graduate Texts in
Mathematics 105, Amer.\ Math.\ Soc.\ ,
Providence, 2009.

\bibitem{Adams}
R.~Adams, {\em Sobolev Spaces}a, Academic Press, 1976.

\bibitem{VA}
V.~Adolfsson, $L^{p}$ integrability of the second order derivatives of
Green potentials in convex domains. Pacific J.\ Math.\ {\bf 159} (1993),
no.\ 2, 201-225.

\bibitem{ZeidlerI}
E.~Zeidler, {\em Nonlinear Functional Analysis and its Applications, I: Fixed
Point Theorems} (English translation). Springer-Verlag, New York, 1986..


\bibitem{JR}
J.W.~Jerome and M.E.~Rose,
Error estimates for the multidimensional two-phase Stefan problem. Math.\
Comp.\ {\bf 39} (1982).
\end{thebibliography}
\end{document}